\tikzset{>={Latex[width=1.2mm,length=1.7mm]}}
\newtheorem{thm}{Theorem}[section]
\newtheorem{prop}[thm]{Proposition}
\newtheorem{cor}[thm]{Corollary}
\newtheorem{lem}[thm]{Lemma}
\newtheorem{obs}[thm]{Observation}
\newtheorem{conj}[thm]{Conjecture}
\newtheorem{alg}[thm]{Algorithm}
\newtheorem{prob}[thm]{Problem}
\numberwithin{equation}{section}
\newcommand{\sn}{\mathfrak{S}_n}
\newcommand{\mfs}[1]{\mathfrak{S}_{#1}}
\newcommand{\zsn}{\mathbb{Z}[\sn]}
\newcommand{\qsn}{\mathbb{Q}[\sn]}
\newcommand{\qp}[2]{q^{\frac{#1}{#2}}}
\newcommand{\imm}[1]{\mathrm{Imm}_{#1}}
\newcommand{\sumsb}[1]{\sum_{\substack{#1}}}  
\newcommand{\defeq}{:=} 
\newcommand{\spn}{\mathrm{span}}
\newcommand{\sgn}{\mathrm{sgn}}
\newcommand{\triv}{\mathrm{triv}}
\newcommand{\wgt}{\mathrm{wgt}}
\newcommand{\avoidingp}{avoiding the patterns $3412$ and $4231${}}
\newcommand{\tr}{{\negthickspace \top \negthickspace}}
\newcommand{\ttnsp}{\hspace{.5mm}}
\newcommand{\ntnsp}{\negthinspace}
\newcommand{\ntksp}{\negthickspace}
\newcommand{\nTksp}{\negthickspace\negthickspace}
\newcommand{\bp}{\begin{prob}}
\newcommand{\ep}{\end{prob}}
\newcommand{\perm}{\mathrm{per}}
\newcommand{\ctype}{\mathrm{ctype}}
\newcommand{\inc}{\mathrm{inc}}
\newcommand{\permmon}[2]{#1_{1,#2_1} \ntnsp\cdots {#1}_{n,#2_n}}
\newcommand{\ssm}{\smallsetminus}
\newcommand{\slambda}{\mathfrak{S}_\lambda}
\newcommand{\circd}[1]{\raisebox{-9pt}{\textcircled{\raisebox{-.9pt}{#1}}}}
\newcommand{\trspace}[1]{\mathcal{T}_{#1}}
\newcommand{\upparrow}{\big \uparrow \nTksp \phantom{\uparrow}}
\newcommand{\rec}{\mathrm{rec}}
\newcommand{\des}{\mathrm{des}}
\begin{document}
\author{Mark Skandera}

\title{Hook immanantal inequalities for totally nonnegative matrices}

\bibliographystyle{dart}

\date{\today}

\begin{abstract}
  Given a weakly decreasing positive integer sequence
  $\lambda = (\lambda_1,\dotsc,\lambda_\ell)$ summing to $n$,
  let $\chi^\lambda$ denote the irreducible character
  of the symmetric group $\sn$ indexed by $\lambda$.
  This representation has dimension $\chi^\lambda(e)$,
  where $e$ is the identity element of $\sn$.
  Let $\imm{\chi^\lambda}$
  denote the corresponding
  {\em irreducible character immanant},
  the function on $n \times n$ matrices $A = (a_{i,j})$ defined by
\begin{equation*}
  \imm{\chi^\lambda}(A) \defeq \ntksp \sum_{w \in \sn} \ntksp
  \chi^\lambda(w) \ttnsp \permmon aw.
\end{equation*}
Merris conjectured [{\em Linear Multilinear Algebra} {\bf 14} (1983) pp.\ 21--35] 
and Heyfron proved [{\em Linear Multilinear Algebra} {\bf 24} (1988) pp.\ 65--78] 
that irreducible character immanants
indexed by ``hook'' sequences 
$(k, 1, \dotsc, 1)$
satisfy the inequalities
\begin{equation*}
   \perm(A)=\frac{\imm{\chi^n}(A)}{\chi^{n}(e)}\geq \frac{\imm{\chi^{n-1,1}}(A)}{\chi^{n-1,1}(e)}\geq \frac{\imm{\chi^{ n-2,1,1}}(A)}{\chi^{n-2,1,1}(e)}\geq \cdots \geq \frac{\imm{\chi^{1,\dotsc,1}}(A)}{\chi^{1,\dotsc,1}(e)}=\det(A),
\end{equation*} 
whenever $A$ is an $n \times n$ Hermitian positive semidefinite matrix.
We prove that the same inequalities hold
whenever $A$ is an $n \times n$ totally nonnegative matrix.
\end{abstract}

\maketitle

\section{Introduction}\label{s:intro}
A matrix $A \in \mathrm{Mat}_{n \times n}(\mathbb R)$
is called {\em totally nonnegative}
if each minor is nonnegative.
That is, if for all $I, J \subseteq [n] \defeq \{1,\dotsc,n\}$ with $|I| = |J|$,
the submatrix $A_{I,J} \defeq (a_{i,j})_{i \in I, j \in J}$ satisfies
$\det(A_{I,J}) \geq 0$.
A matrix $A \in \mathrm{Mat}_{n \times n}(\mathbb C)$ is called
{\em Hermitian} if it satisfies $A^* = A$ where $*$ denotes conjugate
transpose. Such a matrix is called {\em positive semidefinite}
if we have $y^* A y \geq 0$ for all $y \in \mathbb C^n$.
It is known that this property is equivalent to the condition that
\begin{equation}\label{eq:HPSDminors}
  \det(A_{I,I}) \geq 0 \qquad \text{for all } I \subseteq [n].
  \end{equation}
For $A \in \mathrm{Mat}_{n \times n}(\mathbb R)$, the Hermitian property
reduces to $y^\tr A y \geq 0$ for all $y \in \mathbb R^n$.

Some inequalities satisfied by the entries of
totally nonnegative matrices
are also satisfied by the entries of
Hermitian positive semidefinite matrices, with the latter inequalities
typically being discovered first.
(See \cite[\S 1]{SkanSoskinBJ} for a short exposition.)
A subset of these inequalities concern
expressions
of the form
\begin{equation}\label{eq:immfrac}
  \frac{\imm{\theta}(A)}{\theta(e)},
\end{equation}
where $\theta: \sn \rightarrow \mathbb Z$ is an $\sn$-character,
$e$ is the identity element of $\sn$, and $\imm{\theta}(A)$ is the
{\em $\theta$-immanant}
defined by
\begin{equation}\label{eq:immdef}
  \imm{\theta}(A) = \sum_{w \in \sn} \ntnsp \theta(w) \permmon aw.
\end{equation}

Since $\theta(e)$ is the dimension of the representation
having character $\theta$, the ratio (\ref{eq:immfrac})
is clearly real when $A$ is totally nonnegative.
To see that it is also real when $A$ is Hermitian positive semidefinite,
let $\trspace n$ be the
real vector space
spanned by all $\sn$-characters,
and recall that $\dim (\trspace n)$ 
equals the number of
{\em integer partitions} of $n$,
weakly decreasing positive integer sequences
$\lambda$
summing to $n$.
Let
$\lambda \vdash n$ denote that $\lambda$ is a partition of $n$,
and consider
the irreducible character basis
$\{ \chi^\lambda \,|\, \lambda \vdash n \}$ and
induced sign character basis
$\{ \epsilon^\lambda \,|\, \lambda \vdash n \}$ of $\trspace n$.
Arbitrary $\sn$-characters $\theta$ satisfy
\begin{equation}\label{eq:thetainepsilonspan}
  \theta \in \spn_{\mathbb N}\{\chi^\lambda \,|\, \lambda \vdash n \}
  \subseteq \spn_{\mathbb Z}\{\epsilon^\lambda \,|\, \lambda \vdash n \},
  \end{equation}
with
\begin{equation}\label{eq:epsilonlincombo}
  \theta = \sum_{\lambda \vdash n} b_\lambda \epsilon^\lambda
  \quad
  \Longleftrightarrow
  \quad
  \imm \theta(A) = \sum_{\lambda \vdash n} b_\lambda \imm{\epsilon^\lambda}(A),
  \end{equation}
and the
Littlewood--Merris--Watkins
identity~\cite{LittlewoodTGC}, \cite{MerWatIneq} asserts that
\begin{equation}\label{eq:lmw}
  \imm{\epsilon^\lambda}(A) = \sum \det(A_{I_1,I_1}) \cdots \det(A_{I_\ell,I_\ell}),
\end{equation}
where $\ell = \ell(\lambda)$ is the number of nonzero components of $\lambda$,
and the sum is over all sequences $(I_1,\dotsc,I_\ell)$ of disjoint
subsets of $[n]$ satisfying $|I_j|= \lambda_j$.
This number is real by (\ref{eq:HPSDminors}), and therefore
by (\ref{eq:thetainepsilonspan}) -- (\ref{eq:epsilonlincombo})
the number (\ref{eq:immfrac}) is real as well.




The study of
inequalities 
for the expressions (\ref{eq:immfrac}) evaluated at Hermitian
positive semidefinite matrices was originally motivated by work of 
Hadamard and Schur,
and was later remotivated by generalizations of Marcus and Lieb.
(See \cite[\S 1]{WLiebPDC}.)
The study of inequalities for the expressions evaluated at totally nonnegative
matrices is motivated by Lusztig's work on quantum groups and their
connection to total nonnegativity~\cite{LusztigTP}.
(See also \cite[\S 1]{SkanNNDCB}.)


Barrett and Johnson~\cite{BJMajor} showed that
expressions (\ref{eq:immfrac})
with induced sign characters
satisfy
\begin{equation}\label{eq:indsgnimmineq}
  \frac{\imm{\epsilon^\lambda}(A)}{\epsilon^\lambda(e)}
  \geq
  \frac{\imm{\epsilon^\mu}(A)}{\epsilon^\mu(e)}
\end{equation}
for all real positive semidefinite matrices
if and only if {\em $\lambda$ is majorized by $\mu$},
i.e., if and only if we have
$\lambda_1 + \cdots + \lambda_i \leq \mu_1 + \cdots + \mu_i$ for all $i$.
The author and Soskin~\cite{SkanSoskinBJ}
showed that the inequalities (\ref{eq:indsgnimmineq})
hold for all $n \times n$ totally nonnegative matrices
under the same conditions on $\lambda$ and $\mu$.
The problem of characterizing the inequalities (\ref{eq:indsgnimmineq})
which hold for all Hermitian positive semidefinite matrices remains
open.

Borcea--Br\"and\'en proved inequalities involving products of expressions
(\ref{eq:immfrac}) when $A$ is Hermitian positive semidefinite.
In particular for
$k = 2,\dotsc, n-1$ we have~\cite[Cor.\,3.1(b)]{BBApps}
  \begin{equation}\label{eq:borcea1}
    \left(\frac{\imm{\epsilon^{k,n-k}}(A)}{\epsilon^{k,n-k}(e)}\right)^{\ntksp2}
    \geq
    \left(\frac{\imm{\epsilon^{k+1,n-k-1}}(A)}{\epsilon^{k+1,n-k-1}(e)}\right)
    \left(\frac{\imm{\epsilon^{k-1,n-k+1}}(A)}{\epsilon^{k-1,n-k+1}(e)}\right)\ntnsp;
  \end{equation}
  for
  $k = 1,\dotsc, n-1$ we have~\cite[Cor.\,3.1(c)]{BBApps}
  \begin{equation}\label{eq:borcea2}
    \left(\frac{\imm{\epsilon^{k+1,n-k-1}}(A)}{\epsilon^{k+1,n-k-1}(e)}\right)^{\ntksp k}
    \ntksp \det(A)
    =
    \left(\frac{\imm{\epsilon^{k+1,n-k-1}}(A)}{\epsilon^{k+1,n-k-1}(e)}\right)^{\ntksp k}
    \frac{\imm{\epsilon^n}(A)}{\epsilon^n(e)}
    \geq
    \left(\frac{\imm{\epsilon^{k,n-k}}(A)}{\epsilon^{k,n-k}(e)}\right)^{\ntksp k+1}\nTksp\nTksp\ntnsp.
  \end{equation}
  Inequalities (\ref{eq:borcea1}) -- (\ref{eq:borcea2})
  are not known to hold for totally nonnegative matrices.

Schur~\cite{SchurUberendliche} proved that for each $\sn$-character $\theta$,
the inequality
\begin{equation}\label{eq:schur}
  \frac{\imm{\chi^{1,\dotsc, 1}}(A)}{\chi^{1, \dots, 1}(e)} = \det(A)
  \leq \frac{\imm \theta(A)}{\theta(e)}
\end{equation}
holds for all Hermitian positive semidefinite matrices.
Stembridge~\cite[Cor.\,3.4]{StemImm} showed that it holds
for totally nonnegative matrices as well.
Johnson showed that for each $\sn$-character $\theta$, the
permanental analog 
\begin{equation}\label{eq:liebdominance}
  \frac{\imm{\chi^n}(A)}{\chi^n(e)} = \perm(A)
  \geq \frac{\imm \theta(A)}{\theta(e)}
\end{equation}
of (\ref{eq:schur}) holds for totally nonnegative matrices
(unpublished; see \cite[p.\,1088]{StemConj}).
Lieb~\cite{LiebPerm} conjectured the same for Hermitian positive semindefinite
matrices.  This statement, known as the {\em permanental dominance conjecture}
is still open.

The problems of characterizing inequalities of the form
\begin{equation}\label{eq:irrimmineq}
  \frac{\imm{\chi^\lambda}(A)}{\chi^\lambda(e)}
  \geq
  \frac{\imm{\chi^\mu}(A)}{\chi^\mu(e)}
\end{equation}
for Hermitian positive semidefinite matrices and totally nonnegative matrices
are open,
with preliminary work of James~\cite[Appendix]{JamesImm}
and Stembridge~\cite[\S 3]{StemConj},
not suggesting any simple criterion with which to compare
$\lambda$, $\mu$.
On the other hand,
Merris conjectured~\cite[\S 4]{MerrisSingleHook}
and Heyfron proved~\cite[Thm.\,1]{HeyfronImmDom}
a characterization of
the subset of inequalities (\ref{eq:irrimmineq})
which hold for Hermitian positive semidefinite matrices
when $\lambda$ and $\mu$ are {\em hook partitions}, i.e., have
the form
\begin{equation*}
  k1^{n-k} \defeq
  (k, \; \underbrace{\ntksp 1, \dotsc, 1 \ntksp }_{n-k}\;).
  \end{equation*}
\begin{thm}\label{t:heyfron}
  For $A$ an $n \times n$ Hermitian positive semidefinite matrix we have
\begin{equation*}
   \perm(A)=\frac{\imm{\chi^n}(A)}{\chi^{n}(e)}\geq \frac{\imm{\chi^{n-1,1}}(A)}{\chi^{n-1,1}(e)}\geq \frac{\imm{\chi^{ n-2,1,1}}(A)}{\chi^{n-2,1,1}(e)}\geq \cdots \geq \frac{\imm{\chi^{1,\dotsc,1}}(A)}{\chi^{1,\dotsc,1}(e)}=\det(A).
\end{equation*} 
\end{thm}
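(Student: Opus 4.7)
The plan is to prove the chain term by term. For each $k \in \{2,\dotsc,n\}$, I aim to show
\[
  \imm{\delta_k}(A) \geq 0, \qquad
  \delta_k \defeq \binom{n-1}{k-2}\chi^{k,1^{n-k}} - \binom{n-1}{k-1}\chi^{k-1,1^{n-k+1}},
\]
which, after using $\chi^{k,1^{n-k}}(e) = \binom{n-1}{k-1}$, is exactly the $k\th$ consecutive comparison in the displayed chain. The two equalities at the endpoints come from the definitions of $\chi^n$ and $\chi^{1^n}$, so nothing remains but the $n-1$ intermediate comparisons.

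My first attempt is to expand $\delta_k$ in the induced sign character basis $\{\epsilon^\lambda\}_{\lambda\vdash n}$ of $\trspace n$. By the Littlewood--Merris--Watkins identity (\ref{eq:lmw}), each $\imm{\epsilon^\lambda}(A)$ is a sum of products of principal minors, so (\ref{eq:HPSDminors}) delivers nonnegativity whenever the coefficients in that expansion are themselves nonnegative; should this route succeed, the TNN version of the theorem advertised in the abstract comes along for free. If the coefficients are not all nonnegative, my fallback is the Schur--Marcus sum-of-squares representation: writing $A = B^*B$, one has
\[
  \chi^\lambda(e)\,\imm{\chi^\lambda}(A) = \sum_T \|v_T\|^2,
\]
with $T$ ranging over standard Young tableaux of shape $\lambda$ and $v_T$ built from the columns of $B$ by applying the Young symmetrizer attached to $T$. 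For hook shapes the indexing set is combinatorially transparent --- a standard tableau of shape $(k,1^{n-k})$ is simply a choice of which $k-1$ of the entries $2,\dotsc,n$ occupy the arm --- which makes an explicit term-by-term comparison between consecutive hook shapes plausible.

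The main obstacle is reconciling the binomial-coefficient scalings against the two distinct Young symmetrizers. I expect to address this by exploiting the branching identity
\[
  \chi^{k,1^{n-k}}\big|_{\mfs{n-1}} = \chi^{k-1,1^{n-k}} + \chi^{k,1^{n-k-1}},
\]
which is special to hooks: it opens the door to an induction on $n$ in which the inequality for shape $(k,1^{n-k})$ reduces to the two hook inequalities one size down, plus a cross term that must be controlled using the HPSD property of principal submatrices of $A$. Producing a clean nonnegativity argument for that cross term is the crux, and it is precisely this hook-specific branching, absent in the general irreducible case (\ref{eq:irrimmineq}), that explains why the hook problem is tractable while the general one remains open.
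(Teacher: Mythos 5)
Note first that the paper does not give its own proof of Theorem~\ref{t:heyfron}: the result is attributed to Merris (conjecture) and Heyfron (proof), and the paper's contribution is Theorem~\ref{t:main}, the totally nonnegative analogue. So there is no in-paper argument to measure your proposal against, and I will evaluate it on its own terms.

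Your reduction to proving $\imm{\delta_k}(A) \geq 0$ is correct, and the two endpoint equalities are immediate. But the first route --- expanding $\delta_k$ in the induced sign character basis and hoping for nonnegative coefficients --- cannot work. Under the Frobenius map this is exactly the expansion of $\binom{n-1}{k-2}s_{k1^{n-k}} - \binom{n-1}{k-1}s_{(k-1)1^{n-k+1}}$ in the elementary basis, and those coefficients have mixed signs. Concretely, take $n=4$, $k=2$: the dual Jacobi--Trudi identity gives $s_{211} = e_3e_1 - e_4 = e_{31} - e_4$, so
\[
  \delta_2 = \chi^{211} - 3\chi^{1111} \;\longmapsto\; (e_{31} - e_4) - 3e_4 = e_{31} - 4e_4,
\]
with a negative coefficient on $e_4$. (The resulting inequality $\imm{\epsilon^{31}}(A) \geq 4\det(A)$ for HPSD $A$ is Schur's inequality for $\lambda=(2,1,1)$ --- true, but evidently not by nonnegativity of $\epsilon$-coefficients.) So the ``free ride'' to the TNN version does not materialize this way. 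For the record, the paper's second proof of Theorem~\ref{t:main} does find a positive expansion, but in the basis of partial monomial-trace sums $\theta^\ell$, not $\epsilon^\lambda$, and the nonnegativity used there (Proposition~\ref{p:sumofmonimms}) is a total-nonnegativity fact with no established HPSD analogue, so it does not rescue this route either.

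Your fallback is closer in spirit to what is actually needed: writing $A = B^*B$, setting $v = b_1 \otimes \cdots \otimes b_n$, and using $\imm{\chi^\lambda}(A) = \tfrac{n!}{\chi^\lambda(e)}\,\|P_\lambda v\|^2$ with $P_\lambda$ the isotypic projector is the standard starting point, and the hook branching rule you quote is correct. But as written this is an outline, not a proof: you yourself flag the cross-term control in the induction as ``the crux,'' and the proposal offers no mechanism for it. The difficulty is genuine --- $\|P_{k1^{n-k}}v\|^2$ and $\|P_{(k-1)1^{n-k+1}}v\|^2$ live in orthogonal isotypic components of the tensor power, so no term-by-term comparison over standard tableaux is available without a further identity, and Heyfron's actual argument involves a transfer-type lemma that is considerably more than branching-plus-induction. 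As it stands, the proposal correctly identifies a plausible strategy and an honest obstacle, but it does not constitute a proof of the theorem.
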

We will prove that the same inequalities hold
whenever $A$ is totally nonnegative.
In Section~\ref{s:tracesf} we review symmetric functions and traces.
In Sections~\ref{s:chrom} -- \ref{s:tnn}
we discuss chromatic symmetric functions, posets,
and their applications to total nonnegativity.
This leads to two proofs of our main theorem in Section~\ref{s:main}.


\section{Symmetric functions and $\sn$-traces}\label{s:tracesf}
Inside of $\trspace n$, the $\mathbb Z$-module
$\spn_{\mathbb Z}\{ \chi^\lambda \,|\, \lambda \vdash n \}$
of {\em virtual characters}
is isomorphic to the $\mathbb Z$-module $\Lambda_n$
of homogeneous symmetric functions of degree $n$.
Six standard bases of $\Lambda_n$ consist of the
monomial $\{m_\lambda \,|\, \lambda \vdash n\}$,
elementary $\{e_\lambda \,|\, \lambda \vdash n\}$,
(complete) homogenous $\{h_\lambda \,|\, \lambda \vdash n\}$,
power sum $\{p_\lambda \,|\, \lambda \vdash n\}$,
Schur $\{s_\lambda \,|\, \lambda \vdash n\}$, and
forgotten $\{f_\lambda \,|\, \lambda \vdash n\}$ symmetric functions.
(See, e.g., \cite[Ch.\,7]{StanEC2} for definitions.)
An involutive automorphism $\omega: \Lambda \rightarrow \Lambda$ defined
by $\omega(e_k) = h_k$ for all $k$ acts on these bases of $\Lambda_n$ by
\begin{equation*}
  \omega(s_\lambda) = s_{\lambda^\tr}, \qquad
  \omega(m_\lambda) = f_\lambda, \qquad
  \omega(e_\lambda) = h_\lambda, \qquad
  \omega(p_\lambda) = (-1)^{n-\ell(\lambda)}p_\lambda,
\end{equation*}
where we define the {\em transpose} partition
$\lambda^\tr = (\lambda^\tr_1, \dotsc, \lambda^\tr_{\lambda_1})$ of $\lambda$
by
\begin{equation*}
  \lambda^\tr_i = \# \{ j \,|\, \lambda_j \geq i \}.
\end{equation*}

Linking these two $\mathbb Z$-modules is the Frobenius isomorphism
\begin{equation}\label{eq:Frob}
  \begin{aligned}
  \mathrm{Frob}: \spn_{\mathbb Z}\{ \chi^\lambda \,|\, \lambda \vdash n \}
  &\rightarrow \Lambda_n\\
  \theta &\mapsto \frac 1{n!} \sum_{w \in \sn} \ntnsp \theta(w) p_{\ctype(w)},
  \end{aligned}
\end{equation}
where $\ctype(w)$ is the cycle type of $w$.
This maps the bases of
irreducible characters,
induced sign characters,
and induced trivial characters of $\sn$
to the Schur, elementary, and homogeneous bases of $\Lambda_n$, respectively,
\begin{equation*}
  \chi^\lambda \mapsto s_\lambda,
  \qquad
  \epsilon^\lambda \defeq \sgn \ntnsp \upparrow_{\slambda}^{\sn} \mapsto e_\lambda,
  \qquad
  \eta^\lambda \defeq \triv \ntnsp \upparrow_{\slambda}^{\sn} \mapsto h_\lambda.  
\end{equation*}
Here $\slambda$ is the Young subgroup of $\sn$
indexed by $\lambda$.  (See, e.g., \cite{Sag}.)
The power sum,
monomial,
and forgotten bases of $\Lambda_n$ correspond
to
bases of $\trspace n$
which are not characters.
We call these the
{\em power sum}
$\{\psi^\lambda \,|\, \lambda \vdash n \}$,
{\em monomial}
$\{ \phi^\lambda \,|\, \lambda \vdash n \}$,
and {\em forgotten}
$\{ \gamma^\lambda \,|\, \lambda \vdash n \}$
traces of $\sn$, respectively.
These
are
the virtual character bases related to the irreducible character bases
by the same matrices of character evaluations and
Koskta numbers that relate
power sum, monomial, and forgotten symmetric functions to Schur functions,
\begin{equation}\label{eq:mforgotten}
  \begin{alignedat}{3}
    p_\lambda &= \sum_\mu \chi^\mu(\lambda) s_\mu, &\qquad
    s_\lambda &= \sum_\mu K_{\lambda,\mu} m_\mu, &\qquad
    s_\lambda &= \sum_\mu K_{\lambda^\tr,\mu} f_\mu,\\
    \psi^\lambda &= \sum_\mu \chi^\mu(\lambda) \chi^\mu, &\qquad
    \chi^\lambda &= \sum_\mu K_{\lambda,\mu} \phi^\mu, &\qquad
    \chi^\lambda &= \sum_\mu K_{\lambda^\tr,\mu} \gamma^\mu,
  \end{alignedat}
\end{equation}
where $\chi^\mu(\lambda) \defeq \chi^\mu(w)$
for any $w \in \sn$ having cycle type $\lambda$.
 The power sum traces of $\trspace n$
also have the natural definition
\begin{equation}\label{eq:psidef}
  \psi^\lambda(w) \defeq \begin{cases}
    z_\lambda &\text{if $\ctype(w) = \lambda$},\\
    0 &\text{otherwise},
  \end{cases}
\end{equation}
where $z_\lambda = \lambda_1 \cdots \lambda_\ell \alpha_1! \cdots \alpha_n!$
and $\alpha_i$ is the number of components
of $\lambda$ equal to $i$.

The character evaluations $\chi^\mu(\lambda)$ can be performed using
the {\em Murnaghan--Nakayama rule}, 
or in the special case that $\lambda = 1^n$, the
{\em hooklength formula}. (See, e.g., \cite[\S 7.17, \S 7.21]{StanEC2}.)
Alternatively, for
$\mu  = (\mu_1,\dotsc,\mu_r) \vdash n$,
the
number
$\chi^\mu(1^n) = \chi^\mu(e)$
counts
standard Young tableaux of shape $\mu$.
A {\em Young diagram of shape $\mu$} consists
of $r$ left-justified rows of boxes with $\mu_i$ boxes in row $i$,
and a {\em standard Young tableau of shape $\mu$}
is a filling of this 
with $1, \dotsc, n$,
so that entries strictly increase in rows and columns.
For example, in $\mfs 4$ we have
$\chi^{31}(e) = 3$:
\begin{equation*}
  \tableau[scY]{2|1,3,4}\,,
  \qquad \tableau[scY]{3|1,2,4}\,,
  \qquad \tableau[scY]{4|1,2,3}\,.
  \end{equation*}
When $\mu$ is a hook partition, this leads to the following simple expression.
\begin{obs}\label{o:hookchie}
  We have $\chi^{k1^{n-k}}(e) = \tbinom{n-1}{k-1}$.
\end{obs}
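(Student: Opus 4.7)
The plan is to give a direct combinatorial argument using the fact just recalled in the paper: $\chi^{k1^{n-k}}(e)$ equals the number of standard Young tableaux of shape $k1^{n-k}$. The hook shape consists of a single row of length $k$ with $n-k$ additional boxes stacked below the leftmost cell, so a standard Young tableau of this shape is specified by filling these $n$ cells with $1,\dotsc,n$ increasing along the row and along the column.

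The key observation is that the entry $1$ must sit in the unique cell common to both the row and the column, namely the corner. Once $1$ is placed, the remaining $n-1$ entries $\{2,\dotsc,n\}$ must be partitioned into a set of $k-1$ entries filling the rest of the first row and a set of $n-k$ entries filling the rest of the first column. Since each of the row and column is required to be strictly increasing and neither branch interacts with the other after removing the corner, the tableau is determined by the choice of which $k-1$ elements of $\{2,\dotsc,n\}$ go to the row. There are $\binom{n-1}{k-1}$ such choices, and the claim follows.

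As a cross-check I would also verify the count by the hook length formula $\chi^\mu(e) = n! \big/ \prod_c h(c)$: in the hook diagram the corner has hook length $n$, the remaining row cells contribute hooks $k-1, k-2, \dotsc, 1$, and the remaining column cells contribute hooks $n-k, n-k-1, \dotsc, 1$, so the product of hook lengths is $n \cdot (k-1)! \cdot (n-k)!$ and the formula gives the same binomial coefficient.

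There is no serious obstacle: the bijection between standard Young tableaux of hook shape and subsets of $\{2,\dotsc,n\}$ of size $k-1$ is immediate, and the hook length verification is a short direct computation. The main thing is to present the bijection cleanly and to note that both the ``first row'' and ``first column'' branches are automatically forced to be increasing by the choice of which elements appear in each, so no further conditions need to be checked.
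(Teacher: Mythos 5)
Your argument is exactly the paper's: place $1$ in the corner, choose the $k-1$ entries from $\{2,\dotsc,n\}$ for the rest of the first row, and note the remaining entries and increasing orders are forced, yielding $\binom{n-1}{k-1}$. The hook-length cross-check is a nice bonus but not needed.
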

\begin{proof}
  Filling a Young diagram of shape $k1^{n-k}$ with $\{1,\dotsc,n\}$,
  we place $1$ in position $(1,1)$, we choose $k-1$ letters from
  $\{2, \dotsc, n\}$
  to follow it in row $1$ in increasing order,
  and place the remaining letters in column $1$ in increasing order.
\end{proof}


The Kostka number $K_{\lambda,\mu}$ equals the number of
{\em semistandard Young tableaux of shape
  $\lambda$ and content $\mu$}
i.e., Young diagrams of shape $\lambda$
filled with $\mu_1$ ones, $\mu_2$ twos, etc.,
so that entries
strictly increase in columns and weakly increase in rows.
For example, in $\mfs 7$ we have $K_{411,2211} = K_{411,3111} = 3$:
\begin{equation*}
  \tableau[scY]{4|3|1,1,2,2}\,,
  \quad
  \tableau[scY]{4|2|1,1,2,3}\,,
  \quad
  \tableau[scY]{3|2|1,1,2,4}\,,
  \quad
  \tableau[scY]{4|3|1,1,1,2}\,,
  \quad
  \tableau[scY]{4|2|1,1,1,3}\,,
  \quad
  \tableau[scY]{3|2|1,1,1,4}\,.
  \quad
\end{equation*}
When $\lambda$ is a hook partition $k1^{n-k}$,
we have a simple formula for $K_{\lambda,\mu}$ which
depends only upon $k$ and 
the number $\ell(\mu)$ of components of $\mu$.
This leads to a simple expansion of the corresponding irreducible
characters in terms of monomial traces.

\begin{prop}\label{p:hookKostka}
  For each hook partition $k1^{n-k} \vdash n$
  and each arbitrary partition $\mu \vdash n$,
  we have
  \begin{equation}\label{eq:hookKostka}
    K_{k1^{n-k}\ntnsp,\;\mu} =
    \binom{\ell(\mu)-1}{n-k},
  \end{equation}
  and therefore
  \begin{equation}\label{eq:hookexpansion}
    \chi^{k1^{n-k}} = \ntksp \sum_{\ell = n-k+1}^n \ntksp \binom{\ell-1}{n-k}
    \ntksp \sumsb{\mu \vdash n\\\ell(\mu) = \ell} \ntksp \phi^\mu.
\end{equation}
\end{prop}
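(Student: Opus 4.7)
The plan is to prove the Kostka identity~(\ref{eq:hookKostka}) by a direct combinatorial enumeration of semistandard Young tableaux of hook shape, and then to deduce~(\ref{eq:hookexpansion}) immediately from the second formula in the middle row of~(\ref{eq:mforgotten}).

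For~(\ref{eq:hookKostka}), I would describe a semistandard Young tableau $T$ of shape $k1^{n-k}$ and content $\mu$ as consisting of a row of length $k$ (entries weakly increasing left-to-right) and a column of length $n-k+1$ (entries strictly increasing top-to-bottom), sharing the corner cell $(1,1)$. Since $\mu_1 \geq 1$ and the entry at $(1,1)$ is simultaneously the minimum of its row and of its column, that corner entry must equal $1$. The column cells $(2,1),\dotsc,(n-k+1,1)$ then carry $n-k$ strictly increasing entries drawn from $\{2,3,\dotsc,\ell(\mu)\}$; once any $(n-k)$-element subset $S$ of $\{2,\dotsc,\ell(\mu)\}$ is selected, the entries of $S$ fill the column in the unique increasing order, and the remaining entries (namely $\mu_1-1$ copies of $1$ and $\mu_i - [i\in S]$ copies of each $i \geq 2$) fill the $k-1$ remaining row cells in the unique weakly increasing order. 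Hence $T$ is determined by $S$, giving exactly $\binom{\ell(\mu)-1}{n-k}$ tableaux.

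For~(\ref{eq:hookexpansion}), I would invoke the identity $\chi^\lambda = \sum_\mu K_{\lambda,\mu} \phi^\mu$ from~(\ref{eq:mforgotten}), substitute $\lambda = k1^{n-k}$ and apply~(\ref{eq:hookKostka}) to obtain
\begin{equation*}
\chi^{k1^{n-k}} = \sum_{\mu \vdash n} \binom{\ell(\mu)-1}{n-k}\, \phi^\mu.
\end{equation*}
Since $\binom{\ell-1}{n-k}$ vanishes when $\ell \leq n-k$, I would group the sum by $\ell = \ell(\mu)$, restricting to $n-k+1 \leq \ell \leq n$, which is exactly~(\ref{eq:hookexpansion}).

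The main (and really only) substantive step is the semistandard-tableau count; the rest is a bookkeeping consequence of~(\ref{eq:mforgotten}). The small subtlety to handle carefully is that the identification of the corner entry and the one-to-one correspondence with $(n-k)$-subsets of $\{2,\dotsc,\ell(\mu)\}$ both rely on $\mu$ being a partition with positive parts, so that $\mu_i \geq 1$ for $1 \leq i \leq \ell(\mu)$; this ensures the combinatorial argument works uniformly and yields the clean binomial formula independent of the individual $\mu_i$.
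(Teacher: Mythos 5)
Your proof is correct and takes essentially the same approach as the paper: both force the corner entry to be $1$, choose the remaining $n-k$ column entries as an $(n-k)$-subset of $\{2,\dotsc,\ell(\mu)\}$, observe that the row is then uniquely determined, and then apply~(\ref{eq:mforgotten}) to get the monomial-trace expansion.
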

\begin{proof}
  To see (\ref{eq:hookKostka}), observe that for $k = 1,\dotsc,n$,
  each semistandard Young tableau $U$ of shape $k1^{n-k}$ and content $\mu$
  has the entry $1$ in position $(1,1)$.
  The remaining $n-k$ entries of column $1$ must be distinct,
  must be chosen from the set $\{2,\dotsc,\ell = \ell(\mu)\}$, and
  must appear in increasing order.
  There are $\tbinom{\ell-1}{n-k}$ ways to choose these.
  The remaining $k-1$ entries of row $1$ are then uniquely determined by
  completing the multiset $1^{\mu_1} \cdots \ell^{\mu_\ell}$.  
  These appear
  in row $1$ in weakly increasing order.  Now apply (\ref{eq:mforgotten})
  to obtain (\ref{eq:hookexpansion}).
\end{proof}


It can be useful to record trace evaluations
in a symmetric generating function.
In particular, for $D \in \qsn$,
we record induced sign character evaluations by defining
\begin{equation}\label{eq:ydef}
  Y(D) \defeq \sum_{\lambda \vdash n} \epsilon^\lambda(D) m_\lambda
  \in \mathbb Q \otimes \Lambda_n.
\end{equation}
This symmetric generating function in fact gives all
standard trace evaluations $\theta(D)$.
(See \cite[Prop.\,2.1]{SkanCCS}.)
\begin{prop}\label{p:Yexpand}
The symmetric function $Y(D)$ is equal to
  \begin{equation*}
      \sum_{\lambda \vdash n} \eta^\lambda(D)f_\lambda
    = \sum_{\lambda \vdash n}\frac{(-1)^{n-\ell(\lambda)}\psi^\lambda(D)}{z_\lambda}p_\lambda
    = \sum_{\lambda \vdash n} \chi^{\lambda^\tr}(D)s_\lambda
    = \sum_{\lambda \vdash n} \phi^\lambda(D)e_\lambda
    = \sum_{\lambda \vdash n} \gamma^\lambda(D)h_\lambda.
  \end{equation*}
\end{prop}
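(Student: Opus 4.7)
The plan is to recognize $Y(D)$ as a basis-independent construction and then verify that each of the five alternative expressions arises from a different choice of basis. Let $\langle\cdot,\cdot\rangle$ denote the Hall inner product on $\Lambda_n$, so that $\{h_\lambda\}$ and $\{m_\lambda\}$, $\{e_\lambda\}$ and $\{f_\lambda\}$, and $\{s_\lambda\}$ and $\{s_\lambda\}$ are dual pairs of bases, and $\{p_\lambda\}$ is dual to $\{p_\lambda/z_\lambda\}$. Recall that $\omega$ is an involutive isometry of this inner product. I claim that for any basis $\{g_\lambda\}_{\lambda \vdash n}$ of $\Lambda_n$ with Hall-dual basis $\{g_\lambda^*\}$, the symmetric function
\begin{equation*}
\sum_{\lambda \vdash n} \mathrm{Frob}^{-1}(g_\lambda)(D) \cdot \omega(g_\lambda^*)
\end{equation*}
is independent of the choice of $\{g_\lambda\}$. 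Indeed, a change of basis $g'_\mu = \sum_\lambda A_{\mu\lambda} g_\lambda$ transforms the Hall-dual basis by $(A^{-1})^\tr$; since $\mathrm{Frob}^{-1}$ and $\omega$ are linear, the matrix factor $A$ on the trace-evaluation side cancels the factor $(A^{-1})^\tr$ on the symmetric-function side.

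Choosing $g_\lambda = e_\lambda$ recovers $Y(D)$ as defined in (\ref{eq:ydef}), since $e_\lambda^* = f_\lambda$ and $\omega(f_\lambda) = m_\lambda$. The five alternative expressions then correspond to other choices of $\{g_\lambda\}$: $g_\lambda = h_\lambda$ gives $\sum_\lambda \eta^\lambda(D) f_\lambda$ since $h_\lambda^* = m_\lambda$ and $\omega(m_\lambda) = f_\lambda$; $g_\lambda = s_{\lambda^\tr}$ gives $\sum_\lambda \chi^{\lambda^\tr}(D) s_\lambda$ by Schur self-duality and $\omega(s_{\lambda^\tr}) = s_\lambda$; $g_\lambda = m_\lambda$ gives $\sum_\lambda \phi^\lambda(D) e_\lambda$; $g_\lambda = f_\lambda$ gives $\sum_\lambda \gamma^\lambda(D) h_\lambda$; and $g_\lambda = p_\lambda$ gives $\sum_\lambda (-1)^{n-\ell(\lambda)} \psi^\lambda(D) z_\lambda^{-1} p_\lambda$, using $p_\lambda^* = p_\lambda/z_\lambda$ together with $\omega(p_\lambda) = (-1)^{n-\ell(\lambda)} p_\lambda$ and the natural definition (\ref{eq:psidef}) of $\psi^\lambda$.

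The main obstacle is verifying that $\mathrm{Frob}^{-1}$ maps each standard basis of $\Lambda_n$ to the advertised trace basis of $\trspace n$. For $\chi^\lambda$, $\epsilon^\lambda$, $\eta^\lambda$ this is the content of the Frobenius map displayed just after (\ref{eq:Frob}); for $\phi^\lambda$ and $\gamma^\lambda$ it is the defining property of those traces as given in the excerpt; and for $\psi^\lambda$ it follows from a short calculation using (\ref{eq:psidef}) together with the formula in (\ref{eq:Frob}). Once these identifications are in hand, the proposition reduces to the single linear-algebra fact about dual-basis invariance established above.
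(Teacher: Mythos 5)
The paper does not supply a proof of this proposition; it simply cites \cite[Prop.~2.1]{SkanCCS}, so there is no in-paper argument to compare against. Your proof, however, is correct and self-contained.

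Your basis-independence lemma is the right organizing principle: writing
$\sum_{\lambda} \mathrm{Frob}^{-1}(g_\lambda)(D)\,\omega(g_\lambda^*)$
and observing that under $g'_\mu = \sum_\lambda A_{\mu\lambda} g_\lambda$ the dual basis transforms by $(A^{-1})^\tr$, the two change-of-basis matrices cancel after summing over $\mu$, so the expression depends only on $D$. Taking $g = e$ recovers the defining expression $\sum_\lambda \epsilon^\lambda(D) m_\lambda = Y(D)$, and the other five choices ($h$, $s_{\lambda^\tr}$, $m$, $f$, $p$) give the five displayed sums. The supporting facts you invoke are all correct: $\mathrm{Frob}$ carries $\epsilon^\lambda \mapsto e_\lambda$, $\eta^\lambda \mapsto h_\lambda$, $\chi^\lambda \mapsto s_\lambda$ by the display after (\ref{eq:Frob}); it carries $\phi^\lambda \mapsto m_\lambda$, $\gamma^\lambda \mapsto f_\lambda$, $\psi^\lambda \mapsto p_\lambda$ because those traces are \emph{defined} via the transition identities in (\ref{eq:mforgotten}); the Hall-dual pairs $(h,m)$, $(e,f)$, $(s,s)$, $(p, p/z)$ are standard; and $\omega$ acts as stated. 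One small stylistic point: you could equivalently phrase the invariant as the image under $\omega$ of the element of $\Lambda_n$ representing (via the Hall pairing) the linear functional $\ell_D \circ \mathrm{Frob}^{-1}$ on $\Lambda_n$, which makes the basis-independence immediate without any matrix computation, but your version is equally rigorous.
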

\noindent
Furthermore, every element of $\Lambda_n$ is a special case of this.
(See \cite[Prop.\,2.3]{SkanCCS}.)
\begin{prop}\label{p:everysymmfn}
   Every symmetric function in $\Lambda_n$
  has the form $Y(D)$ for some element $D \in \qsn$.
\end{prop}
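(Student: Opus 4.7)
The plan is to prove surjectivity of $Y \colon \qsn \to \mathbb{Q} \otimes \Lambda_n$. Since $\{m_\lambda \,|\, \lambda \vdash n\}$ is a $\mathbb{Q}$-basis of $\mathbb{Q}\otimes \Lambda_n$, and since $Y(D) = \sum_\lambda \epsilon^\lambda(D) m_\lambda$ by definition, it suffices to show that the coordinate map $\Phi \colon \qsn \to \mathbb{Q}^{\#\{\lambda \vdash n\}}$ given by $\Phi(D) = (\epsilon^\lambda(D))_{\lambda \vdash n}$ is surjective.

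The first step is to reduce the problem to finite-dimensional linear algebra. Each $\epsilon^\lambda$ is a class function, so $\epsilon^\lambda(D)$ depends only on the conjugation-averaged version of $D$. Accordingly, I fix a transversal $\{w_\mu \,|\, \mu \vdash n\}$ for the conjugacy classes of $\sn$, indexed by cycle types, and restrict $\Phi$ to the subspace $V = \spn_{\mathbb{Q}}\{w_\mu \,|\, \mu \vdash n\} \subseteq \qsn$. Restricted to $V$, the map $\Phi$ is multiplication by the square matrix $M = (\epsilon^\lambda(w_\mu))_{\lambda,\mu \vdash n}$. It is therefore enough to show that $M$ is invertible over $\mathbb{Q}$.

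The key step is this invertibility of $M$, which is immediate from the fact (already stated in the excerpt) that $\{\epsilon^\lambda \,|\, \lambda \vdash n\}$ is a basis of the class-function space $\trspace n$. Indeed, any nontrivial linear dependence $\sum_\lambda c_\lambda \epsilon^\lambda(w_\mu) = 0$ for all $\mu$ would produce a nontrivial linear combination $\sum_\lambda c_\lambda \epsilon^\lambda$ vanishing on every conjugacy class representative, hence on all of $\sn$, contradicting linear independence in $\trspace n$. Given any symmetric function $f = \sum_\lambda c_\lambda m_\lambda$, we can then solve $M(a_\mu)_\mu = (c_\lambda)_\lambda$ and set $D = \sum_\mu a_\mu w_\mu \in V \subseteq \qsn$, so that $Y(D) = f$ as required.

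There is no substantive obstacle here; the argument is essentially a reformulation of the basis property of the induced sign characters. The one point worth flagging is the choice of ground ring: although the entries of $M$ are integers, its inverse need only have rational entries to produce a suitable $D \in \qsn$, which is exactly what the statement allows. This is why the proposition is stated over $\mathbb{Q}$ rather than $\mathbb{Z}$, and the proof would not go through if we insisted on $D \in \mathbb{Z}[\sn]$.
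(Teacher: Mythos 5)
Your argument is correct. The paper states Proposition~\ref{p:everysymmfn} without an in-text proof, deferring to a cited reference, so there is nothing here to compare against line by line; but your route --- restricting the evaluation map $D\mapsto(\epsilon^\lambda(D))_{\lambda\vdash n}$ to the span of conjugacy-class representatives and observing that the resulting square matrix $M=(\epsilon^\lambda(w_\mu))_{\lambda,\mu\vdash n}$ is invertible because $\{\epsilon^\lambda\}$ is a linearly independent family of class functions --- is a clean, self-contained way to obtain surjectivity of $Y$. A marginally shorter alternative, better aligned with the machinery the present paper has just set up, uses Proposition~\ref{p:Yexpand}, which gives $Y(D)=\sum_{\lambda\vdash n}\chi^{\lambda^\tr}(D)\,s_\lambda$, together with orthogonality of irreducible characters: for $D_\nu=\tfrac1{n!}\sum_{w\in\sn}\chi^\nu(w)\,w$ one computes $\chi^\mu(D_\nu)=\langle\chi^\nu,\chi^\mu\rangle=\delta_{\mu,\nu}$, hence $Y(D_\nu)=s_{\nu^\tr}$, and surjectivity follows since the Schur functions span $\Lambda_n$. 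Both arguments ultimately rest on the same fact --- that the $p(n)$ functionals $D\mapsto\epsilon^\lambda(D)$ (equivalently $D\mapsto\chi^\lambda(D)$) are linearly independent on $\qsn$ --- so the difference is cosmetic: yours solves the linear system implicitly, while the orthogonality version writes down explicit preimages. Your closing remark about why the statement is over $\mathbb{Q}$ rather than $\mathbb{Z}$ is apt.
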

    
\section{Chromatic symmetric functions and posets}\label{s:chrom}

Certain symmetric functions which Stanley~\cite{StanSymm}
associated to graphs are related by Proposition~\ref{p:everysymmfn}
to a subset of the Kazhdan--Lusztig basis $\{ C'_w(1) \,|\, w \in \sn \}$
of $\zsn$ defined in \cite{KLRepCH}.
Specifically, this subset is
$\{ C'_w(1) \,|\, w \in \sn \text{ avoids the pattern } 312 \}$~\cite[Thm.\,7.1]{CHSSkanEKL}.
We make this relationship precise in Proposition~\ref{p:uioto312avoid} and then
state some consequences.

Define a {\em proper coloring} of a (simple undirected) graph $G = (V,E)$
to be an assignment $\kappa: V \rightarrow \{1,2,\dotsc, \}$
of colors (positive integers) to $V$
such that adjacent vertices have different colors.
For $G$ on $|V| = n$ vertices and any partition
$\lambda = (\lambda_1,\dotsc,\lambda_\ell) \vdash n$,
say that a coloring $\kappa$ of $G$ has {\em type}
$\lambda$ if $\lambda_i$ vertices have color $i$ for $i = 1,\dotsc, \ell$.
Let $c(G,\lambda)$ be the number of proper colorings of $G$ of type
$\lambda$.
Define the {\em chromatic symmetric function}
of $G$ to be
\begin{equation}\label{eq:chrom}
  X_G \defeq
  \sum_{\lambda \vdash n} c(G,\lambda) m_\lambda.
\end{equation}
By Proposition~\ref{p:everysymmfn},
we see that for each graph $G$ on $n$
vertices, there exists an element $D \in \qsn$ such that $X_G = Y(D)$.
While $G$ does not uniquely determine such an element $D$,
it does uniquely determine all trace evaluations $\theta(D)$.
(See \cite[Obs.\,3.1]{SkanCCS}.)
\begin{obs}\label{o:yyx}
  Let $G$ be a graph on $n$ vertices
  and let $D \in \qsn$ satisfy $Y(D) = X_G$.
  Then for each trace
  $\theta  = \sum_{\lambda \vdash n} a_\lambda \epsilon^\lambda \in \trspace n$,
  we have
  $\theta(D) = \sum_{\lambda \vdash n} a_\lambda c(G,\lambda)$.
\end{obs}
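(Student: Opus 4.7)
The plan is to extract the key numerical identity $\epsilon^\lambda(D) = c(G,\lambda)$ for every $\lambda \vdash n$ from the hypothesis $Y(D) = X_G$, and then conclude by linearity of trace evaluation on $\qsn$. This is essentially a one-step coefficient-matching argument, so no serious obstacle is anticipated.

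First I would unpack the two sides of the hypothesis in the monomial basis. On one hand, definition~(\ref{eq:ydef}) gives
\begin{equation*}
  Y(D) = \sum_{\lambda \vdash n} \epsilon^\lambda(D)\, m_\lambda,
\end{equation*}
and on the other hand, definition~(\ref{eq:chrom}) gives
\begin{equation*}
  X_G = \sum_{\lambda \vdash n} c(G,\lambda)\, m_\lambda.
\end{equation*}
Since $\{m_\lambda \,|\, \lambda \vdash n\}$ is a basis of $\Lambda_n$, the assumption $Y(D) = X_G$ forces the coefficient-wise equality $\epsilon^\lambda(D) = c(G,\lambda)$ for every partition $\lambda$ of $n$.

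Next I would invoke linearity. Any trace $\theta \in \trspace n$ evaluates on an element $D \in \qsn$ in a $\mathbb{Q}$-linear fashion, so expanding $\theta = \sum_{\lambda \vdash n} a_\lambda \epsilon^\lambda$ and applying the identity from the previous step gives
\begin{equation*}
  \theta(D) = \sum_{\lambda \vdash n} a_\lambda\, \epsilon^\lambda(D)
  = \sum_{\lambda \vdash n} a_\lambda\, c(G,\lambda),
\end{equation*}
which is the desired conclusion. The only mild subtlety worth remarking on is that every $\theta \in \trspace n$ really does admit such an $\epsilon^\lambda$-expansion, which is guaranteed because $\{\epsilon^\lambda \,|\, \lambda \vdash n\}$ is a basis of $\trspace n$ as noted in the preamble to~(\ref{eq:thetainepsilonspan}); no integrality of the coefficients $a_\lambda$ is needed for this observation.
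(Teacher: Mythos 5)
Your proof is correct and is exactly the natural argument: match coefficients of $m_\lambda$ in $Y(D) = X_G$ to get $\epsilon^\lambda(D) = c(G,\lambda)$, then conclude by linearity of trace evaluation. The paper itself gives no inline proof of this observation (it simply cites \cite[Obs.\,3.1]{SkanCCS}), so there is nothing in the present text to compare against, but your reasoning is the standard and complete justification.
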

For every trace
$\theta \in \trspace n$, Proposition~\ref{p:everysymmfn} and
Observation~\ref{o:yyx} allow us to define
\begin{equation}\label{eq:thetaG}
  \theta(G) \defeq \theta(D),
\end{equation}
where $D$ is any element in $\qsn$ satisfying $Y(D) = X_G$.
By Proposition~\ref{p:Yexpand}, we have that
$c(G,\lambda) = \epsilon^\lambda(D)$ and furthermore that
$X_G$ is equal to
  \begin{equation}\label{eq:sumXG}
      \sum_{\lambda \vdash n} \eta^\lambda(G)f_\lambda
    = \sum_{\lambda \vdash n}\frac{(-1)^{n-\ell(\lambda)}\psi^\lambda(G)}{z_\lambda}p_\lambda
    = \sum_{\lambda \vdash n} \chi^{\lambda^\tr}(G)s_\lambda
    = \sum_{\lambda \vdash n} \phi^\lambda(G)e_\lambda
    = \sum_{\lambda \vdash n} \gamma^\lambda(G)h_\lambda.
  \end{equation}


Some conditions on graphs $G$ and traces $\theta$ imply
the numbers $\theta(G)$
to be positive and possibly to have nice algebraic and
combinatorial interpretations.
This is especially ture
when $G$ is the
incomparability graph of a poset which is a unit interval order.
(See, e.g., \cite{CHSSkanEKL}, \cite{SWachsChromQF}, \cite{StanSymm}.)
Given a poset $P$, define its {\em incomparability graph} $\inc(P)$
to be the graph having a vertex for each element of $P$ and
an edge $\{i,j\}$ for each incomparable pair of elements of $P$.
Call the poset a {\em unit interval order} if it has no induced subposet
isomorphic to the posets
\begin{equation*}
  (\mathbf3 + \mathbf1) =\,
  \begin{tikzpicture}[scale=.7,baseline=-5]
\draw[fill] (0,1) circle (1mm); 
\draw[fill] (0,0) circle (1mm); 
\draw[fill] (0,-1) circle (1mm); 
\draw[fill] (.8,0) circle (1mm); 
\draw[-,thick] (0,1) -- (0,0);
\draw[-,thick] (0,0) -- (0,-1);
\end{tikzpicture}\, ,
  \qquad
  (\mathbf2 + \mathbf2) =\,
\begin{tikzpicture}[scale=.7,baseline=-5]
\draw[fill] (0,.5) circle (1mm); 
\draw[fill] (0,-.5) circle (1mm); 
\draw[fill] (.8,.5) circle (1mm); 
\draw[fill] (.8,-.5) circle (1mm); 
\draw[-,thick] (0,.5) -- (0,-.5);
\draw[-,thick] (.8,.5) -- (.8,-.5);
\end{tikzpicture}\,.
\end{equation*}
For example,
the
following
unit interval order
and
graph
\begin{equation}\label{eq:n+1}
  P = \ntnsp
\begin{tikzpicture}[scale=.7,baseline=-5]
\draw[fill] (0,.5) circle (1mm); \node at (0,1) {$4$};
\draw[fill] (0,-.5) circle (1mm); \node at (0,-1) {$1$};
\draw[fill] (.8,.5) circle (1mm); \node at (.8,1) {$5$};
\draw[fill] (.8,-.5) circle (1mm); \node at (.8,-1) {$2$};
\draw[fill] (1.6,0) circle (1mm); \node at (1.6,-.5) {$3$};
\draw[-,thick] (0,.5) -- (0,-.5);
\draw[-,thick] (.8,.5) -- (0,-.5);
\draw[-,thick] (.8,.5) -- (.8,-.5);
\end{tikzpicture}\,,
\qquad \qquad
  G = \ntnsp
\begin{tikzpicture}[scale=.7,baseline=-5]
\draw[fill] (-1.2,.5) circle (1mm); \node at (-1.2,1) {$1$};
\draw[fill] (-.4,.5) circle (1mm); \node at (-.4,1) {$2$};
\draw[fill] (.4,.5) circle (1mm); \node at (.4,1) {$4$};
\draw[fill] (0,-.5) circle (1mm); \node at (0,-1) {$3$};
\draw[fill] (1.2,.5) circle (1mm); \node at (1.2,1) {$5$};
\draw[-,thick] (-1.2,.5) -- (0,-.5);
\draw[-,thick] (-.4,.5) -- (0,-.5);
\draw[-,thick] (.4,.5) -- (0,-.5);
\draw[-,thick] (1.2,.5) -- (0,-.5);
\draw[-,thick] (-1.2,.5) -- (-.4,.5);
\draw[-,thick] (-.4,.5) -- (.4,.5);
\draw[-,thick] (.4,.5) -- (1.2,.5);
\end{tikzpicture}\,
\end{equation}
satisfy $\inc(P) = G$.

Given an $n$-element unit interval order $P$,
it is easy to explicitly describe
an element $D = D(P) \in \sn$
satisfying $Y(D) = X_{\inc(P)}$~\cite[\S 3]{SkanCCS}.  
\begin{alg}\label{a:PtoC}
  Given unit interval order $P$, do
  \begin{enumerate}
  \item For each element $y \in P$, compute
$\beta(y) \defeq
  \# \{ x \in P \,|\, x \leq_P y \} - \# \{ z \in P \,|\, z \geq_P y \}.$
  \item Label the poset elements $1, \dotsc, n$
so that we have
  $\beta(1) \leq \cdots \leq \beta(n)$.
\item Define $w = w(P) = w_1 \cdots w_n$ by
  $w_j = \max ( \{ i \,|\, i \not >_P j \} \ssm \{ w_1, \dotsc, w_{j-1} \} )$.
\item Define $D = \sum_{v \leq w} v$, where $\leq$ is the Bruhat order on $\sn$.
\end{enumerate}
\end{alg}
\noindent
The element $D$ produced by Algorithm~\ref{a:PtoC} is usually written
$C'_w(1)$ and is called the {\em Kazhdan--Lusztig basis element}
indexed by $w$~\cite{KLRepCH}.  (See \cite{BBCoxeter} for more
information on this basis and the Bruhat order.)
The map $P \mapsto w(P)$ defined by Steps 1--3 of Algorithm~\ref{a:PtoC}
is a bijection from $n$-element unit interval orders to the
$\tfrac{1}{n+1}\tbinom{2n}n$ $312$-avoiding permutations in $\sn$,
and gives us the following result~\cite[Cor.\,7.5]{CHSSkanEKL}.
(See \cite{BilleyLak} for more information on pattern avoidance.)
\begin{prop}\label{p:uioto312avoid}
  Let $P$ be an $n$-element unit interval order and $w = w(P)$ be the
  corresponding $312$-avoiding permutation in $\sn$.  Then we have
  $Y(C'_w(1)) = X_{\inc(P)}$.
\end{prop}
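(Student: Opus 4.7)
\emph{Proof plan.} The plan is to verify $Y(C'_w(1)) = X_{\inc(P)}$ by comparing coefficients in the monomial basis of $\Lambda_n$. By (\ref{eq:ydef}) and (\ref{eq:chrom}),
\begin{equation*}
Y(C'_w(1)) = \sum_{\lambda \vdash n} \epsilon^\lambda(C'_w(1))\, m_\lambda, \qquad X_{\inc(P)} = \sum_{\lambda \vdash n} c(\inc(P),\lambda)\, m_\lambda,
\end{equation*}
so the proposition reduces to the numerical identity
\begin{equation*}
\epsilon^\lambda(C'_w(1)) = c(\inc(P),\lambda) \qquad \text{for every } \lambda \vdash n.
\end{equation*}

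For the right-hand side, since an edge of $\inc(P)$ joins exactly the incomparable pairs of $P$, a proper coloring of $\inc(P)$ of type $\lambda$ is the same as an ordered partition of the underlying set into chains of $P$ of sizes $\lambda_1, \dotsc, \lambda_\ell$. Thus $c(\inc(P),\lambda)$ counts ordered chain partitions of $P$ of type $\lambda$. For the left-hand side, since $w = w(P)$ is $312$-avoiding, a theorem of Lascoux--Sch\"utzenberger shows that all Kazhdan--Lusztig polynomials indexed by pairs in $[e, w]$ are identically $1$; specializing at $q = 1$ yields $C'_w(1) = \sum_{v \leq w} v$, in agreement with Step~4 of Algorithm~\ref{a:PtoC}. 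Hence $\epsilon^\lambda(C'_w(1)) = \sum_{v \leq w} \epsilon^\lambda(v)$.

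It remains to identify this character sum with the chain-partition count. My approach would be to exhibit a $\{0,1\}$ incidence matrix $A = A(w)$, with rows and columns indexed by $[n]$ under the $\beta$-labeling from Step~2 of Algorithm~\ref{a:PtoC}, whose support is exactly the set of positions $(i,j)$ for which some $v \leq w$ satisfies $v(i) = j$. The resulting profile property of Bruhat intervals for $312$-avoiding permutations then yields $\imm{\epsilon^\lambda}(A) = \sum_{v \leq w} \epsilon^\lambda(v)$. The Littlewood--Merris--Watkins identity (\ref{eq:lmw}) rewrites this as a sum of products $\det(A_{I_1, I_1}) \cdots \det(A_{I_\ell, I_\ell})$ over ordered set partitions $(I_1, \dotsc, I_\ell)$ of $[n]$ with $|I_j| = \lambda_j$. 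The remaining task is to show that $\det(A_{I, I})$ evaluates to $1$ when $I$ is a chain of $P$ and vanishes otherwise, whereupon the sum recovers $c(\inc(P),\lambda)$.

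The main obstacle is this last determinantal step. It relies on two ingredients: the profile property of $[e, w]$, which is special to $312$-avoiding $w$ and fails in general; and a careful signed-cancellation argument showing that $\det(A_{I,I})$ vanishes precisely when $I$ contains an incomparable pair of $P$. Both features are particular to unit interval orders, which explains why the hypothesis on $P$ cannot be weakened.
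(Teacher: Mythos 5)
Your reduction to $\epsilon^\lambda(C'_w(1)) = c(\inc(P),\lambda)$ is right, and the determinantal step you flag as the main obstacle does go through---but carry it out with the paper's antiadjacency matrix $A(P)$ (entries $a_{i,j}=1$ iff $i\not<_P j$), not the matrix you describe, whose support is $\{(i,j) : v(i)=j\text{ for some } v\leq w\}$. These two matrices can differ: for the unit interval order on $\{1,2,3\}$ with $1,2<_P 3$ one finds $w=213$ and $[e,w]=\{123,213\}$, so your matrix has third row $(0,0,1)$ while the antiadjacency matrix has third row $(1,1,1)$. Both satisfy $\permmon av=[v\leq w]$ and hence give the same value of $\imm{\epsilon^\lambda}$, but only the antiadjacency matrix is guaranteed to have Ferrers-shaped zeros in the upper right, which is exactly the structure your determinant calculation needs.

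With $A$ the antiadjacency matrix under the $\beta$-labeling, the argument is: the diagonal of $A$ is all ones and the zeros form a Ferrers shape in the upper right, and both facts pass to every principal submatrix $A_{I,I}$ with $I=\{i_1<\cdots<i_m\}$. Since the labeling is natural, $i_s<_P i_t$ for $s<t$ is equivalent to $(A_{I,I})_{s,t}=0$, so $I$ is a chain of $P$ exactly when $A_{I,I}$ is lower unitriangular, and then $\det(A_{I,I})=1$. Otherwise let $r(s)$ count the ones in row $s$ of $A_{I,I}$; the Ferrers property forces $r$ to be weakly increasing with $r(s)\geq s$ and $r(m)=m$, so if $s_0$ is maximal with $r(s_0)>s_0$ then $r(s_0)=r(s_0+1)=s_0+1$, rows $s_0$ and $s_0+1$ of $A_{I,I}$ coincide, and $\det(A_{I,I})=0$. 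Feeding this dichotomy into the Littlewood--Merris--Watkins expansion (\ref{eq:lmw}) turns $\epsilon^\lambda(C'_w(1))=\sum_{v\leq w}\epsilon^\lambda(v)=\imm{\epsilon^\lambda}(A)$ into the number of ordered partitions of $[n]$ into chains of sizes $\lambda_1,\dotsc,\lambda_\ell$, which is $c(\inc(P),\lambda)$. The paper itself does not prove this proposition but cites \cite{CHSSkanEKL}, so once corrected your argument stands as a self-contained route consistent with the paper's later use of the antiadjacency matrix and of $\permmon av=[v\leq w]$ from \cite{WatsonBruhat}.
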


For example, elements of the poset $P$ in (\ref{eq:n+1}) are already labeled
as in Step 2 of Algorithm~\ref{a:PtoC}:
  $(\beta(1), \beta(2), \beta(3), \beta(4), \beta(5)) = (-2, -1, 0, 1, 2)$.
Thus we compute
\begin{equation*}
  w(P) = 34521,
  \qquad D = C'_{34521}(1) = \nTksp \sum_{v \leq 34521} \nTksp v,
  \end{equation*}
and we have $Y(C'_{34521}(1)) = X_{\inc(P)}$.

The labeling in Step 2 of
Algorithm~\ref{a:PtoC}~\cite[p.\,33]{Fish}, \cite[\S 8.2]{Trott}
also associates a totally nonnegative matrix to a
unit interval order $P$.
Define the {\em antiadjacency matrix} of a labeled poset $P$ to be the matrix
$A = (a_{i,j})$ with entries
\begin{equation}\label{eq:antiadj}
  a_{i,j} = \begin{cases}
    0 &\text{if $i <_P j$},\\
    1 &\text{otherwise}.
  \end{cases}
\end{equation}
\begin{prop}
  For $P$ a unit interval order labeled as in
  Algorithm~\ref{a:PtoC},
  the antiadjacency matrix $A = A(P)$ is totally nonnegative.
\end{prop}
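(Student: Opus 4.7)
The plan is to establish a ``staircase'' $(0,1)$-structure for $A = A(P)$ under the labeling of Algorithm~\ref{a:PtoC}, and then deduce total nonnegativity by inspecting minors directly. Concretely, the goal is to exhibit a weakly increasing function $f : [n] \to [n]$ such that $a_{i,j} = 1$ exactly when $j \leq f(i)$; staircase matrices of this form are totally nonnegative because every $k \times k$ submatrix inherits the same structure and has determinant $0$ or $1$.

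To produce the staircase structure I would use a unit interval representation of $P$: assign to each $y \in P$ a real number $\ell(y)$ such that $x <_P y$ if and only if $\ell(y) - \ell(x) > 1$. Then
\begin{equation*}
  \beta(y) = \#\{x \in P : \ell(x) < \ell(y) - 1\} - \#\{z \in P : \ell(z) > \ell(y) + 1\}
\end{equation*}
is weakly monotone in $\ell(y)$, so any labeling of $P$'s elements satisfying $\beta(1) \leq \cdots \leq \beta(n)$ can (after a small perturbation of the interval representation to break ties consistently) be realized as $\ell(1) \leq \cdots \leq \ell(n)$. Under such a labeling, $\{j : i \not<_P j\} = \{j : \ell(j) \leq \ell(i)+1\}$ is an initial segment $\{1,\dotsc,f(i)\}$ of $[n]$, and $f$ is weakly increasing in $i$ because $\ell$ is. This is the claimed staircase form.

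To finish, for $I = \{i_1 < \cdots < i_k\}$ and $J = \{j_1 < \cdots < j_k\}$ the submatrix $A_{I,J}$ has row $r$ equal to the indicator of an initial segment of $[k]$ of length $c_r = \#\{s : j_s \leq f(i_r)\}$, with $c_1 \leq \cdots \leq c_k$. A short argument shows that $\det A_{I,J}$ equals $1$ when $c_r = r$ for all $r$ (the submatrix is then lower unitriangular) and $0$ otherwise (a repeated or zero row forces singularity). Either way $\det A_{I,J} \geq 0$, so $A(P)$ is totally nonnegative.

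The main obstacle is justifying that the $\beta$-labeling is consistent with a left-endpoint labeling of some unit interval representation; this requires handling ties in $\beta$ (and in $\ell$) with care, either by perturbing $\ell$ or by checking that the staircase property is invariant under swaps of elements sharing a $\beta$-value. A more combinatorial alternative would deduce the staircase property directly from the $(\mathbf{3}+\mathbf{1})$ and $(\mathbf{2}+\mathbf{2})$ avoidance of $P$, together with the fact that the $\beta$-labeling is a linear extension (the latter follows because $x <_P y$ forces both $\#\{u : u <_P y\} > \#\{u : u <_P x\}$ and $\#\{u : u >_P y\} < \#\{u : u >_P x\}$, hence $\beta(y) > \beta(x)$), but the geometric picture keeps the bookkeeping especially transparent.
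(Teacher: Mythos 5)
Your proof is correct and follows the same strategy as the paper: establish that the zero entries of $A(P)$ form a Ferrers (staircase) region and then conclude total nonnegativity by observing that every square submatrix of such a matrix is either unitriangular or has a repeated (or zero) row. The paper simply asserts the staircase structure without justification, whereas you supply a detailed argument via a unit interval representation; your flagged tie-breaking concern is genuine but easily resolved, e.g., by noting that elements with equal $\beta$-values are mutually incomparable and have identical comparabilities with all other elements, so they index identical rows and columns of $A$.
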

\begin{proof}
  Entries of $A$ equal to $0$ form a Ferrers diagram in the upper right
  of the matrix.  Thus each submatrix of $A$
  has repeated rows and columns, or is unitriangular.
\end{proof}

Combinatorial interpretations of numbers $\theta(\inc(P))$ often
involve generalizations of Young tableaux
called $P$-tableaux, and statistics on these.
Define a {\em $P$-tableau of shape $\lambda \vdash |P|$} to be
a filling of a
Young diagram of shape $\lambda$ with the
elements of $P$, one per box.
For example we have the poset $P$ and $P$-tableaux
\begin{equation}\label{eq:tableauxposet}
  P =
\begin{tikzpicture}[scale=.7,baseline=-5]
\draw[fill] (0,1) circle (1mm); \node at (-.5,1) {$5$};
\draw[fill] (0,0) circle (1mm); \node at (-.5,0) {$3$};
\draw[fill] (0,-1) circle (1mm); \node at (-.5,-1) {$1$};
\draw[fill] (.8,.5) circle (1mm); \node at (1.3,.5) {$4$};
\draw[fill] (.8,-.5) circle (1mm); \node at (1.3,-.5) {$2$};
\draw[-,thick] (0,1) -- (0,0);
\draw[-,thick] (0,0) -- (0,-1);
\draw[-,thick] (0,1) -- (.8,-.5);
\draw[-,thick] (0,-1) -- (.8,.5);
\draw[-,thick] (.8,.5) -- (.8,-.5);
\end{tikzpicture},
\qquad \qquad%
\begin{gathered}
  T \ntnsp= {\tableau[scY]{5|2,4|1,3}}\,, \quad 
  U = {\tableau[scY]{5,4|1,2,3}}\,, \quad
  V \ntnsp= {\tableau[scY]{5,4|3,1,2}}\,, \\
  \phantom{\sum^\sum} \nTksp
  W = {\tableau[scY]{5,4,1,2,3}}\,.
\end{gathered}
\end{equation}

The statistics we apply to $P$-tableaux are
$P$-analogs of traditional
permutation statistics.
Given a $P$-tableau $U$,
let $U_i$ be the $i$th row (from the bottom) of $U$,
and let $U_{i,j}$ be the $j$th entry in row $i$.
Call a position $(i,j)$ in $U$ a {\em descent}
if $U_{i,j} >_P U_{i,j+1}$,
and a {\em record}
if
$U_{i,1}, \dotsc, U_{i,j-1} <_P U_{i,j}$.
Define
$\des_P(U)$ and $\rec_P(U)$
to be the numbers of descents and records in $U$, respectively,
and call $U$
\begin{enumerate}
\item {\em descent-free} or {\em row-semistrict} if $\des_P(U) = 0$,
\item {\em column-strict} if the entries of each column satisfy
  $U_{i,j} <_P U_{i+1,j}$,
\item {\em standard} if it is column-strict and row-semistrict.
\end{enumerate}
For example,
the tableaux in (\ref{eq:tableauxposet}) satisfy
$\des_P(T) = \des_P(U) 
= 0$,
$\des_P(V) = \des_P(W) = 1$,
and
$\rec_P(W) = 1$,
$\rec_P(U) = \rec_P(V)
= 2$,
$\rec_P(T) = 5$.
Tableaux $T$, $U$, are row-semistrict,
$U$, $V$, $W$ are column-strict, and
$U$ is standard.


In terms of the above definitions, we have the following
combinatorial interpretations of trace evaluations.
Interpretations of induced sign character evaluations are
the simplest.  (See, e.g., \cite[Eqn.~(3.11)]{SkanCCS}.)
\begin{thm}\label{t:epsilonevals}
  Let $G$ be any (simple) graph on $n$ vertices,
  and let $P$ be any poset on $n$ elements.  For all $\lambda \vdash n$
  we have
  \begin{enumerate}
  \item $\epsilon^\lambda(G)$ is $c(G,\lambda)$, the number of
    proper colorings of $G$ of type $\lambda$.
  \item $\epsilon^\lambda(\inc(P))$ is the number of column-strict
    $P$-tableaux of shape $\lambda^\tr$.
  \end{enumerate}
\end{thm}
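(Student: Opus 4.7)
The plan is to deduce (1) directly from the definitions, and then derive (2) from (1) via a bijection between proper colorings of $\inc(P)$ and column-strict $P$-tableaux of the transpose shape.

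For part (1), I would invoke Proposition \ref{p:everysymmfn} to fix $D \in \qsn$ with $Y(D) = X_G$. The defining formula (\ref{eq:ydef}) immediately gives the monomial expansion $Y(D) = \sum_{\lambda \vdash n} \epsilon^\lambda(D)\, m_\lambda$, while (\ref{eq:chrom}) gives $X_G = \sum_{\lambda \vdash n} c(G,\lambda)\, m_\lambda$. Matching coefficients in the monomial basis, and then using the identity $\epsilon^\lambda(G) = \epsilon^\lambda(D)$ from (\ref{eq:thetaG}) (which is well defined by Observation~\ref{o:yyx}), closes the argument. This step is essentially bookkeeping.

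For part (2), I would apply (1) with $G = \inc(P)$ and then argue combinatorially. Since the edges of $\inc(P)$ correspond to incomparable pairs in $P$, any two vertices sharing a color in a proper coloring must be pairwise comparable, so each color class is a chain in $P$. A proper coloring of type $\lambda$ therefore records an ordered chain decomposition $(C_1,\dotsc,C_\ell)$ of $P$ with $|C_i|=\lambda_i$. I would send such a decomposition to the $P$-tableau of shape $\lambda^\tr$ whose $i$th column, read bottom-to-top, is $C_i$ arranged in $P$-increasing order; this is well defined because column $i$ of $\lambda^\tr$ has exactly $\lambda_i$ boxes, and the tableau is column-strict by construction. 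The inverse simply assigns color $i$ to each entry in column $i$.

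No serious obstacle is expected: the argument is essentially definitional. The only mild subtlety is keeping the transpose convention straight, namely that column $i$ of $\lambda^\tr$ has length $\lambda_i$ (not $\lambda^\tr_i$), so that chains of the specified sizes fit exactly into the columns of the target shape.
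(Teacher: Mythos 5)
Your proof is correct, and the paper itself does not supply a proof of this theorem but instead cites an external reference (\cite[Eqn.~(3.11)]{SkanCCS}); your argument is the natural derivation from the definitions already set up in the paper. In fact part (1) is stated almost verbatim in the paragraph preceding the theorem (``By Proposition~\ref{p:Yexpand}, we have that $c(G,\lambda) = \epsilon^\lambda(D)$''), so your bookkeeping there simply unpacks the definitions (\ref{eq:ydef}), (\ref{eq:chrom}), and (\ref{eq:thetaG}) as the text intends. For part (2), your color-class-to-column bijection is the right one: independent sets in $\inc(P)$ are exactly chains of $P$, color class $i$ has cardinality $\lambda_i$, and column $i$ of the diagram of $\lambda^\tr$ has length $(\lambda^\tr)^\tr_i = \lambda_i$, so the columns receive chains of exactly the right sizes; reading them bottom-to-top in $P$-increasing order gives column-strictness in the paper's convention ($U_{i,j} <_P U_{i+1,j}$ with rows indexed from the bottom), and assigning color $i$ to column $i$ inverts the map. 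You were right to flag the transpose bookkeeping as the only subtle point, and you handled it correctly.
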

\noindent
Induced trivial character evaluations are also rather
simple~\cite[Thm.\,4.7 (ii-b)]{CHSSkanEKL}, \cite[Thm.\,3.7]{SkanCCS}.
\begin{thm}\label{t:etaevals}
  Let $G$ be any (simple) graph on $n$ vertices,
  and let $P$ be any poset on $n$ elements.
  For all $\lambda  = (\lambda_1,\dotsc, \lambda_r) \vdash n$
  we have
  \begin{enumerate}
  \item $\eta^\lambda(G)$ is the number of sequences $(O_1,\dotsc,O_r)$
    of acyclic orientations of
    induced subgraphs
    of $G$ on pairwise disjoint vertex
    subsets of cardinalities $\lambda_1,\dotsc,\lambda_r$.
  \item $\eta^\lambda(\inc(P))$ is the number of row-semistrict
    $P$-tableaux of shape $\lambda$.
  \end{enumerate}
\end{thm}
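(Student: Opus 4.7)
The plan is to prove both parts of the theorem by working with the chromatic symmetric function $X_G$ and its $f$-expansion $X_G = \sum_\mu \eta^\mu(G)\, f_\mu$ from~\eqref{eq:sumXG}.

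For part~(1), applying the involution $\omega$ and using $\omega(f_\mu)=m_\mu$ (immediate from $\omega(m_\mu)=f_\mu$ and $\omega^2=\mathrm{id}$) gives $\omega(X_G)=\sum_\mu \eta^\mu(G)\,m_\mu$, so $\eta^\lambda(G)=[m_\lambda]\omega(X_G)=[x_1^{\lambda_1}\cdots x_r^{\lambda_r}]\,\omega(X_G)$. I would then invoke Stanley's combinatorial interpretation
\[
\omega(X_G)(x_1,x_2,\dotsc) \;=\; \sum_{(O,f)} x^f,
\]
where $(O,f)$ ranges over pairs of an acyclic orientation $O$ of $G$ and a map $f\colon V(G)\to\mathbb{Z}_{>0}$ satisfying $f(u)\leq f(v)$ whenever $u\to v$ in $O$. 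Extracting the coefficient of $x_1^{\lambda_1}\cdots x_r^{\lambda_r}$ fixes the level sets $B_i := f^{-1}(i)$, an ordered partition of $V(G)$ with $|B_i|=\lambda_i$; the compatibility condition forces every cross-level edge of $O$ from lower to higher block, so $O$ is determined by its restrictions $O_i$ to each induced subgraph $G[B_i]$, and the whole orientation is acyclic if and only if each $O_i$ is. This matches the count asserted in the theorem.

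For part~(2), specializing to $G=\inc(P)$, I would construct a bijection between row-semistrict $P$-tableaux of shape $\lambda$ and the sequences $((B_1,\dotsc,B_r),(O_1,\dotsc,O_r))$ counted in part~(1). Given such a tableau $U$, take $B_i$ to be the entries of row~$i$ and orient each edge $\{U_{i,j},U_{i,k}\}$ of $\inc(P)[B_i]$ with $j<k$ as $U_{i,j}\to U_{i,k}$; any orientation read off a linear order is acyclic, so $O_i$ is acyclic. For the inverse, it suffices to show that for each $(B_i,O_i)$ there is a unique linear extension of the DAG $O_i$ with no consecutive $P$-descent. Existence follows from a simple rewriting argument: whenever a linear extension of $O_i$ has a consecutive $P$-descent $(\sigma_j,\sigma_{j+1})$, the entries $\sigma_j,\sigma_{j+1}$ are $P$-comparable and hence not an edge of $\inc(P)[B_i]$, so transposing them preserves the linear extension of $O_i$ while strictly decreasing the number of $P$-inversions; iterate to termination.

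The main obstacle is \emph{uniqueness}. Two row-semistrict linear extensions $\sigma,\tau$ of $O_i$ automatically agree on every $P$-incomparable pair (since both extend $O_i$) but can \emph{a priori} disagree on $P$-comparable pairs. The subtle point is that a row-semistrict extension can have $u<_P v$ with $v$ preceding $u$, provided the intervening entries form a chain of $P$-incomparabilities that bridges the non-descent condition, so uniqueness cannot be reduced to sorting all $P$-comparable pairs into $P$-order. I would therefore establish uniqueness either by proving local confluence of the descent-swap rewriting system---two independently available descent swaps commute or share a common further resolution---and invoking Newman's lemma, or by exhibiting an explicit inductive recipe: at each step identify the unique source of $O_i$ among remaining elements whose selection as the next entry admits a row-semistrict completion, and verify by induction on $|B_i|$ that the choice is forced.
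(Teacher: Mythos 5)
Part (1) is correct and proceeds as in the literature: apply $\omega$ to the $f$-expansion of $X_G$, invoke Stanley's description of $\omega X_G$ as the generating function over pairs $(O,f)$ with $O$ an acyclic orientation and $f$ an $O$-compatible coloring, and extract the coefficient of $x_1^{\lambda_1}\cdots x_r^{\lambda_r}$. (The paper cites this theorem to \cite{CHSSkanEKL} and \cite{SkanCCS} rather than proving it in-text, so the comparison here is to the standard sources rather than to an argument in the present paper.)

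For part (2) your forward map and the existence half of the inverse are sound, but uniqueness---which you correctly flag as the crux---is left as two sketches, neither complete as written. Newman's lemma plus local confluence yields a unique normal form \emph{per starting element}, not a globally unique descent-free linear extension of $O_i$: you would still need the (true but unstated) fact that the linear extensions of $O_i$ are connected under adjacent transpositions of $P$-comparable pairs, and you would still need to check local confluence, whose only nontrivial case is the overlapping descent $\sigma_j>_P\sigma_{j+1}>_P\sigma_{j+2}$, where transitivity gives $\sigma_j>_P\sigma_{j+2}$ and both reduction paths terminate at $(\sigma_{j+2},\sigma_{j+1},\sigma_j)$. Your inductive recipe is closer to a complete proof, but its key claim---that the first entry is forced---needs an actual argument. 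Here is one: the sources of $O_i$ span no edge of $\inc(P)$, hence are pairwise $P$-comparable and form a $P$-chain with a unique $P$-minimum $s$. If a descent-free linear extension $L$ of $O_i$ starts with some other source, then $L_1>_P s$ and $s=L_p$ for some $p\geq 2$; since $s$ is a source, every $\inc(P)$-neighbor of $s$ lies after $s$ in $L$, so $L_1,\dotsc,L_{p-1}$ are all $P$-comparable to $s$, with $L_1>_P s$ and $L_{p-1}<_P s$ (the latter by descent-freeness at position $(p-1,p)$); therefore some consecutive pair satisfies $L_i>_P s>_P L_{i+1}$, a descent, a contradiction. Induction on $|B_i|$ then forces the entire row, completing the bijection.
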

\noindent 
While $\chi^\lambda(G)$ is negative for some graphs $G$,
even for some incomparability graphs $\inc(P)$,
Stanley and Stembridge~\cite[Conj.\,5.1]{StanStemIJT}
conjectured $\chi^\lambda(\inc(P))$ to be nonnegative
for $(\mathbf3 + \mathbf 1)$-free posets $P$.
Gasharov~\cite{GashInc} proved this,
and Kaliszewski~\cite[Prop.\ 4.3]{KaliHook}
showed that when $\lambda$ is a hook partition,
the evaluation $\chi^\lambda(\inc(P))$
is nonnegative for all posets $P$.
Combining these results into one statement, we have the
following.
\begin{thm}\label{t:KaliGash}
  Let $P$ be an $n$-element poset.
  \begin{enumerate}
  \item For $\lambda \vdash n$ a hook shape,
      $\chi^\lambda(\inc(P))$ is the number of
      standard $P$-tableaux of shape $\lambda$.
  \item 
      If $P$ is $(\mathbf3+\mathbf1$)-free then for any
      $\lambda \vdash n$,
      $\chi^\lambda(\inc(P))$ is the number of
      standard $P$-tableaux of shape $\lambda$.
  \end{enumerate}
  \end{thm}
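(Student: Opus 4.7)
The plan is to derive both parts uniformly from the Jacobi--Trudi determinantal identity combined with a sign-reversing involution on tuples of row-semistrict $P$-sequences. Theorem \ref{t:etaevals}(2) provides the bridge between the induced trivial character evaluations $\eta^\mu(\inc(P))$ and $P$-tableaux.

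First I would apply the Jacobi--Trudi identity $s_\lambda = \det(h_{\lambda_i - i + j})_{i,j = 1}^{\ell(\lambda)}$. Combining this with the Frobenius isomorphism (\ref{eq:Frob}) and the Schur expansion of $X_{\inc(P)}$ in (\ref{eq:sumXG}), one obtains
\[
\chi^\lambda(\inc(P)) = \sum_{\sigma \in \mfs{\ell(\lambda)}} \sgn(\sigma)\, \eta^{\alpha(\sigma)}(\inc(P)),
\]
where $\alpha(\sigma)_i \defeq \lambda_i + \sigma(i) - i$; here we set $\eta^\alpha = 0$ if $\alpha$ has any negative part and $\eta^\alpha = \eta^\mu$ for $\mu$ the partition rearrangement of $\alpha$ otherwise. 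By Theorem \ref{t:etaevals}(2), each nonzero term enumerates ordered tuples $(R_1, \dotsc, R_\ell)$ that partition $P$ into row-semistrict sequences of lengths $\alpha(\sigma)_1, \dotsc, \alpha(\sigma)_\ell$.

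Next I would define a sign-reversing involution on the resulting signed collection. Given a row-tuple that fails column-strictness, scan for the leftmost column $j$ at which some adjacent pair of rows violates $R_{i,j} <_P R_{i+1,j}$, and swap the tails of rows $R_i$ and $R_{i+1}$ from column $j$ onward. This swap changes $\sigma$ by an adjacent transposition, reversing the sign, and is manifestly self-inverse. The fixed points are exactly the column-strict, row-semistrict row-tuples of shape $\lambda$, that is, the standard $P$-tableaux of shape $\lambda$.

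The hard part will be verifying that the tail-swap preserves row-semistrictness, so that the involution stays within the signed collection of row-tuples. For Part 2 (Gasharov), this is precisely where the $(\mathbf 3 + \mathbf 1)$-free hypothesis on $P$ enters: an induced $(\mathbf 3 + \mathbf 1)$ would permit a configuration in which the swap introduces a new $P$-descent inside a row, while the absence of such patterns forces the boundary comparisons to cooperate. For Part 1 (Kaliszewski), the hook shape $\lambda = k 1^{n-k}$ has only one row of length greater than $1$, so the Jacobi--Trudi determinant collapses into the short alternating sum $s_{k1^{n-k}} = \sum_{j \geq 0}(-1)^j h_{k+j} e_{n-k-j}$ and the involution need only swap between the top row and one singleton below it; this drastically restricted setting can be made to work for an arbitrary poset $P$, bypassing the need for $(\mathbf 3 + \mathbf 1)$-freeness.
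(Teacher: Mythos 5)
The paper does not actually prove Theorem~\ref{t:KaliGash} --- it cites Gasharov~\cite{GashInc} for part (2) and Kaliszewski~\cite{KaliHook} for part (1) and moves on. So your proposal is an attempt to reconstruct those external proofs, not to match an argument in the paper. Your framing --- pull Jacobi--Trudi back through Frobenius to get $\chi^\lambda = \sum_\sigma \sgn(\sigma)\,\eta^{\alpha(\sigma)}$, evaluate at $\inc(P)$ via Theorem~\ref{t:etaevals}(2), and cancel the non-standard signed row-tuples by an involution --- is indeed the architecture of Gasharov's proof, so the strategy is right at the top level. The formula $\chi^\lambda(\inc(P)) = \sum_\sigma \sgn(\sigma)\,\eta^{\alpha(\sigma)}(\inc(P))$ is correct.

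The involution you write down, however, is wrong as stated, and its repair is exactly the substance you are skipping. Swapping the tails of rows $i$ and $i+1$ \emph{from the same column $j$} produces rows of lengths $\alpha(\sigma)_{i+1}$ and $\alpha(\sigma)_i$. But the adjacent transposition $\sigma \mapsto \sigma\cdot(i,i+1)$ should produce lengths $\lambda_i + \sigma(i+1) - i$ and $\lambda_{i+1} + \sigma(i) - (i+1)$, which differ from $\alpha(\sigma)_{i+1}$ and $\alpha(\sigma)_i$ by $\pm(\lambda_i - \lambda_{i+1} + 1) \ge 1$. So your swap never lands in the term indexed by $\sigma\cdot(i,i+1)$, and the claim that it ``changes $\sigma$ by an adjacent transposition'' is false for every partition $\lambda$. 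The correct involution (as in the Gessel--Viennot picture) must swap along diagonals, i.e.\ offset the cut point in row $i+1$ by $\lambda_i - \lambda_{i+1} + 1$. Once that arithmetic is fixed there remains the genuinely hard part, which you correctly name but do not supply: proving that the diagonal tail-swap keeps each row $P$-descent-free. This is where Gasharov's proof invokes a specific lemma about $(\mathbf 3 + \mathbf 1)$-free posets; a bare appeal to ``the absence of such patterns forces the boundary comparisons to cooperate'' is not an argument, and the lemma does real work. Likewise for Part (1): the hook identity $s_{k1^{n-k}} = \sum_{j\ge 0}(-1)^j h_{k+j}e_{n-k-j}$ is a legitimate starting point (and closer in spirit to Kaliszewski than a pure Jacobi--Trudi expansion), but it replaces row-tuples with (row, column)-pairs, via Theorem~\ref{t:etaevals}(2) for the $h$-factor and Theorem~\ref{t:epsilonevals}(2) for the $e$-factor, so the required involution is a transfer between a semistrict row and a strict column, not a tail-swap between two rows. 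Showing that this transfer works for \emph{arbitrary} $P$ is precisely Kaliszewski's contribution, and nothing in the proposal addresses it. In short: correct reduction, correct high-level plan, but the key combinatorial lemma in each part is asserted rather than proved, and the involution as written is not even shape-preserving in the signed sum.
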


Observation~\ref{o:hookchie} and Theorem~\ref{t:KaliGash}
imply
that when $P$ is an $n$-element chain, the hook irreducible
character evaluations
$\chi^{k1^{n-k}}(\inc(P)) = \tbinom{n-1}{k-1}$
and $\chi^{(k-1)1^{n-k+1}}(\inc(P)) = \tbinom{n-1}{k-2}$
are related by
\begin{equation}\label{eq:hookchareqs}
  (k-1) \chi^{k1^{n-k}}(\inc(P)) = (n-k+1) \chi^{k-1,1^{n-k+1}}(\inc(P))
\end{equation}
for $k = 2,\dotsc,n$.
For an arbitrary poset $P$ no analogous equality
exists, but we do have similar
inequalities.
\begin{lem}\label{l:hookineq}
  For each naturally labeled $n$-element poset $P$ and $k = 2,\dotsc,n$
  we have
  \begin{equation}\label{eq:binomapprox}
    (k-1) \chi^{k1^{n-k}}(\inc(P)) \geq (n-k+1) \chi^{k-1,1^{n-k+1}}(\inc(P)).
  \end{equation}
  Furthermore, the difference
   $(k-1) \chi^{k1^{n-k}}(\inc(P)) - (n-k+1) \chi^{k-1,1^{n-k+1}}(\inc(P))$
  equals the number of standard $P$-tableaux
  of shape $k1^{n-k}$ in which one entry of the first row is marked
  and is incomparable in $P$ to an entry in an earlier column of the tableau.
\end{lem}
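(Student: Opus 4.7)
My plan is to prove both parts of the lemma simultaneously by exhibiting a bijection. Using Theorem~\ref{t:KaliGash}(1), I would identify $\chi^{k1^{n-k}}(\inc(P))$ and $\chi^{(k-1)1^{n-k+1}}(\inc(P))$ with counts of standard $P$-tableaux, so that $(k-1)\chi^{k1^{n-k}}(\inc(P))$ is the cardinality of the set $\mathcal{A}$ of pairs $(T,j)$ with $T$ a standard $P$-tableau of shape $k1^{n-k}$ and $j\in\{2,\dotsc,k\}$ a chosen cell of row $1$ outside column $1$, while $(n-k+1)\chi^{(k-1)1^{n-k+1}}(\inc(P))$ is the cardinality of the set $\mathcal{B}$ of pairs $(S,i)$ with $S$ standard of shape $(k-1)1^{n-k+1}$ and $i\in\{2,\dotsc,n-k+2\}$ a chosen cell of column $1$ above $(1,1)$. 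I would call $(T,j)\in\mathcal{A}$ \emph{good} if $T_{1,j}$ is $P$-comparable to every entry in a column $j'<j$, and \emph{bad} otherwise; then both the inequality and the claimed formula for the difference reduce to producing a bijection $f\colon\mathcal{B}\to\{\text{good pairs in }\mathcal{A}\}$.

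The map $f$ I have in mind ``moves'' the marked entry between shapes. Given $(S,i)$, set $y=S_{i,1}$, delete $y$ from column $1$ (the remaining cells still form a chain), and insert $y$ into row $1$ at the smallest position $j^*\in\{2,\dotsc,k\}$ for which the resulting row $1$ is row-semistrict, producing $(T,j^*)$. The inverse takes a good pair $(T,j)$, deletes $x=T_{1,j}$ from row $1$, and inserts $x$ at its unique place in the column-$1$ chain of the remaining cells.

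The hardest part will be the following strengthened comparability for good pairs: $x=T_{1,j}>_P T_{1,j'}$ strictly for every $j'<j$. I would prove this by downward induction on $j'$. The base case $j'=j-1$ combines row-semistrictness $T_{1,j-1}\not>_P T_{1,j}$ with goodness (comparability of $x$ and $T_{1,j-1}$) to force $T_{1,j-1}<_P x$. For the inductive step, assuming $x>_P T_{1,j'+1}$, the alternative $T_{1,j'}>_P x$ would give $T_{1,j'}>_P T_{1,j'+1}$ by transitivity, contradicting row-semistrictness; so goodness forces $T_{1,j'}<_P x$. This strengthening is precisely what makes the inverse well-defined: it yields $T_{1,1}<_P x$, so $x$ lands at some position $i\geq 2$ in column $1$, and a parallel transitivity argument ($T_{1,j-1}>_P T_{1,j+1}$ would combine with $T_{1,j-1}<_P x$ to give $x>_P T_{1,j+1}$, contradicting row-semistrictness of $T$) ensures that the row $1$ obtained by removing $x$ stays row-semistrict.

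Granted the claim, mutual inverseness will be a short check: the leftmost-valid-insertion index satisfies $j^*\geq j$ by the claim applied to $f(S,i)$, and $j^*\leq j$ by the row-semistrict condition $T_{1,j}\not>_P T_{1,j+1}$, while the chain-insertion returns $x$ to its original height within column $1$. The resulting identity $|\{\text{good pairs}\}|=|\mathcal{B}|$, combined with $|\mathcal{A}|=(k-1)\chi^{k1^{n-k}}(\inc(P))$, will yield both the inequality and the interpretation of the difference as the number of bad pairs.
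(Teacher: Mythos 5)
Your proposal is correct and takes essentially the same route as the paper: the paper also defines the sets of marked tableaux (called $\mathcal R_k$, $\mathcal C_{k-1}$ there), moves the marked entry from column $1$ into row $1$, and characterizes the complement of the image as the tableaux whose marked entry is incomparable to something in an earlier column. The one superficial difference is the insertion rule — you insert at the \emph{leftmost} position giving a row-semistrict row, while the paper inserts at the \emph{rightmost} position at which the element is a record — but since $y >_P T_{1,1}$ and row $1$ is semistrict, a short argument shows these two positions coincide, so the maps are identical.
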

\begin{proof}
  Let $\mathcal C_k = \mathcal C_k(P)$
  be the set of standard $P$-tableaux of shape
  $k 1^{n-k}$ in which one entry of column $1$
  other than that in position $(1,1)$ is marked.
  Let $\mathcal R_k = \mathcal R_k(P)$
  be the set of standard $P$-tableaux of shape
  $k 1^{n-k}$ in which one entry of row $1$
  other than that in position $(1,1)$ is marked.
  The inequality (\ref{eq:binomapprox}) asserts that
  $|\mathcal R_k| \geq |\mathcal C_{k-1}|$.
  To see this, we define a family of maps
  $\{f_k: \mathcal C_{k-1} \rightarrow \mathcal R_k \,|\, k = 2,\dotsc, n\}$,
  by letting
  $f_k(U)$ be the tableau constructed from $U$ by
  removing the marked element from the first column of $U$,
  and reinserting it as far as possible to the right in row $1$
  subject to the requirement that it be a record.

  To see that the map $f_k$ is well-defined,
  let $m \in [n]$ be the marked element in column $1$ of $U$.
  Since $U$ is standard,
  $m$ must be greater than the element in position $(1,1)$.
  Thus it will certainly be inserted into positions $(1,2), \dotsc, (1,k-1)$
  or at the end of row $1$, creating a tableau of shape $k1^{n-k}$ with
  one marked element in the first row.
  We claim that the map $f_k$ is injective.
  To see this,
  consider a tableau $V \in \mathcal R_{k}$
  which satisfies $V = f_k(U)$ for some $U \in \mathcal C_{k-1}$.
  A marked element $i$ in row $1$ of $V$ will necessarily be greater in $P$
  than all elements to its left in row $1$, and will be comparable in $P$
  to all elements in column $1$.
  To recover $U$, remove $i$ from row $1$ of $V$
  and insert it into the unique position of column $1$ so that entries there
  increase.  The resulting tableau will still be $P$-semistrict
  because if the element preceding $i$ in $V$, say $h$,
  and the element following $i$ in $V$, say $j$,
  satisfy $h >_P j$, then we have $i >_P h >_P j$, contradicting
  the semistrictness of row $1$ of $V$.
  
  To see that the difference
  $(k-1) \chi^{k1^{n-k}}(P) - (n-k+1) \chi^{k-1,1^{n-k+1}}(P)$
  has the claimed interpretation, consider the
  elements of $\mathcal R_k \ssm f(C_{k-1})$.
  These are standard $P$-tableaux $V$
  which contain a marked element $i$ in row $1$
  which is not a record in row $1$, or which is incomparable to
  some element of column $1$.  We claim that if $i$ is
  not a record in row $1$, then it must
  be incomparable to an element to its left in row $1$.
  Assume otherwise: assume that $i$ follows $h$ in row $1$ with $h <_P i$
  and that some element $j >_P i$ appears earlier than $h$ in row $1$.
  Assume that $j$ is the rightmost such element with these properties.
  Then we have $j >_P h$.
  Let $g$ be the element immediately following $j$ in row $1$.
  By our choice of $j$ we cannot have $g >_P j$,
  and since $V$ is standard we cannot have $j >_P g$.
  Thus $j$ must be incomparable to $g$.
  By our choice of $j$ we cannot have $g >_P i$,
  and our assumption on $i$ we cannot have $g$ incomparable to $i$.
  Thus we have $g <_P i$.  But then we have $g <_P i <_P j$, a contradiction.
\end{proof}
For example, define $\mathcal C_k$ and $\mathcal R_k$ as in the proof
of \ref{l:hookineq}, and consider
the poset $P$ in (\ref{eq:tableauxposet}).
The set $\mathcal C_3$
consists of two tableaux.  Applying $f_4$ to these we have
\begin{equation*}
  f_4\ntksp\left( \;\tableau[scY]{5|\circd3|1,2,4}\; \right) =
  \;\tableau[scY]{5|1,\circd3,2,4}\,, \qquad
  f_4\ntksp\left( \;\tableau[scY]{\circd 5|3|1,2,4}\; \right) =
  \;\tableau[scY]{3|1,2,\circd5,4}\,,
\end{equation*}
and the difference
$3 \chi^{41}(P) - 2 \chi^{311}(P)$ counts elements of $\mathcal R_4$
whose first row contains a marked element which is not a record
in that row, or which is incomparable to some element of the first column,
\begin{equation*}
  \;\tableau[scY]{5|1,3,\circd2,4}\,, \qquad
  \;\tableau[scY]{5|1,3,2,\circd4}\,, \qquad
  \;\tableau[scY]{5|1,\circd4,3,2}\,, \qquad
  \;\tableau[scY]{5|1,4,\circd3,2}\,, \qquad
  \;\tableau[scY]{5|1,4,3,\circd2}\,, \dotsc.
  \end{equation*}

The statement preceding Theorem~\ref{t:KaliGash} implies
that $\phi^\lambda(G)$ is negative for some graphs $G$,
even for some incomparability graphs $G = \inc(P)$.
Hikita~\cite[Thm.\,3]{HikitaProof} showed that it is positive
for $G = \inc(P)$ and $P$ a $(\mathbf3 + \mathbf1)$-free poset,
settling the Stanley--Stembridge conjecture~\cite[Conj.\,5.5]{StanStemIJT}.
\begin{thm}\label{t:hikita}
  For $P$ a $(\mathbf3 + \mathbf1)$-free poset
  and $\lambda \vdash n$,
  we have $\phi^\lambda(\inc(P)) \geq 0$.
\end{thm}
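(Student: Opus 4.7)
By equation (\ref{eq:sumXG}), the assertion $\phi^\lambda(\inc(P)) \geq 0$ is equivalent to the chromatic symmetric function $X_{\inc(P)}$ expanding with nonnegative coefficients in the elementary basis whenever $P$ is $(\mathbf{3}+\mathbf{1})$-free, which is the long-standing Stanley--Stembridge conjecture. My starting point would be to reduce to the case of natural unit interval orders; Guay-Paquet has shown the general $(\mathbf{3}+\mathbf{1})$-free case follows from this more rigidly structured subfamily via a convex-combination argument in the Hopf algebra of chromatic symmetric functions. Unit interval orders correspond bijectively to Dyck paths, so the problem becomes producing an $e$-positive formula for $X_{\inc(P)}$ indexed by a Dyck path.

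With this reduction in place, I would first pursue a direct combinatorial model. Combining Gasharov's interpretation of $\chi^\mu(\inc(P))$ from Theorem~\ref{t:KaliGash}(ii) with the Egecioglu--Remmel description of inverse Kostka numbers via special rim hook tableaux, one may express $\phi^\lambda(\inc(P))$ as a signed sum over pairs $(U,S)$ where $U$ is a standard $P$-tableau of some shape $\mu$ and $S$ is a special rim hook tableau of shape $\mu$ and content $\lambda$. The next step would be to define a sign-reversing involution on oppositely-signed pairs whose fixed points give the desired positive count, acting locally on rim hooks and tableau entries in a way that genuinely invokes the forbidden subposet hypothesis. A secondary combinatorial attempt, modeled on the proof of Lemma~\ref{l:hookineq}, would try to interpret $\phi^\lambda(\inc(P))$ directly via a ``marking and reinsertion'' bijection adapted to non-hook shapes.

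If the purely combinatorial route resists, I would pivot to the geometric framework of Brosnan--Chow and Guay-Paquet, which identifies $X_{\inc(P)}$, via the Frobenius correspondence, with the graded character of the dot action of $\sn$ on the cohomology of the regular semisimple Hessenberg variety associated to $P$. Under this dictionary $\phi^\lambda(\inc(P))$ becomes the multiplicity of the induced trivial character $\eta^\lambda$ in that representation, so $e$-positivity amounts to exhibiting an $\sn$-stable filtration of the cohomology whose graded pieces are permutation modules. The plan would be to construct such a filtration from a torus action on the Hessenberg variety whose fixed-point locus is controlled by the underlying poset.

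The main obstacle in both approaches is pinpointing where the $(\mathbf{3}+\mathbf{1})$-free hypothesis actually enters. Every candidate involution collides with the absence of a manifestly local certificate excluding the forbidden subposet, and the geometric decomposition into permutation modules resists direct computation outside low rank. I expect this to be the binding difficulty, and indeed the resolution by Hikita requires tools from affine Springer theory that finally detect $(\mathbf{3}+\mathbf{1})$-freeness inside an equivariant fixed-point decomposition, going well beyond what elementary tableau combinatorics or naive cohomological arguments can supply.
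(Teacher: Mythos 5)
This theorem is not proved in the paper: it is stated as Hikita's resolution of the Stanley--Stembridge conjecture and cited without proof, \cite[Thm.\,3]{HikitaProof}. Your proposal correctly identifies the statement as equivalent, via equation (\ref{eq:sumXG}), to $e$-positivity of $X_{\inc(P)}$, and --- consistent with the paper's treatment --- ultimately defers to Hikita's external result after surveying the known attack routes (Guay-Paquet's reduction to natural unit interval orders, Gasharov plus inverse Kostka involutions, the Brosnan--Chow/Guay-Paquet Hessenberg variety correspondence). Be aware that what you have written is an accurate account of the problem's landscape and of why a self-contained proof is out of reach by elementary means, rather than a proof itself; since the paper likewise supplies no proof, this is the appropriate posture here.
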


\noindent
Hikita's proof unfortunately
does not provide a combinatorial interpretation
of $\phi^\lambda(\inc(P))$.
\bp
Find a combinatorial interpretation for 
$\phi^\lambda(\inc(P))$ which holds for all $\lambda \vdash n$ and for
$n$-element posets $P$ avoiding $\mathbf3 + \mathbf1$.
\ep

A related result for
monomial traces~\cite[Lem.\,4.1]{AthanPSE}, \cite[Thm.\,3.3]{StanSymm}
concerns sums
of the form
  \begin{equation}\label{eq:thetaldef}
    \theta^\ell = \sumsb{\mu \vdash n\\\ell(\mu) = \ell} \ntnsp \phi^\mu.
  \end{equation}
\begin{prop}\label{p:stanksources}
  Let $G$ be any (simple) graph on $n$ vertices,
  and let $P$ be any poset on $n$ elements.
  The traces $\{ \theta^\ell \,|\, 1 \leq \ell \leq n \}$
  satisfy
  \begin{enumerate}
  \item $\theta^\ell(G)$ is the number of acyclic
    orientations of $G$ having $\ell$ sources,
    \item
      $\theta^\ell(\inc(P))$ is the number of
      descent-free $P$-tableaux of shape $n$ having $\ell$
      records.
  \end{enumerate}
\end{prop}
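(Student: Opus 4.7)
The plan is to establish part~(1) by combining the elementary-basis expansion~(\ref{eq:sumXG}) of $X_G$ with Stanley's theorem on the $e_\mu$-coefficients of $X_G$, and to deduce part~(2) from~(1) via an explicit bijection tailored to incomparability graphs.

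For part~(1), I would begin from~(\ref{eq:sumXG}), which gives $X_G = \sum_{\mu \vdash n} \phi^\mu(G)\, e_\mu$, so by the definition~(\ref{eq:thetaldef}) of $\theta^\ell$,
\begin{equation*}
  \theta^\ell(G) \;=\; \sumsb{\mu \vdash n\\\ell(\mu) = \ell} \phi^\mu(G)
\end{equation*}
is precisely the sum of $e_\mu$-coefficients of $X_G$ over partitions $\mu$ of length~$\ell$. Stanley's theorem~\cite[Thm.\,3.3]{StanSymm} identifies this sum as the number of acyclic orientations of $G$ having exactly $\ell$ sources.

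For part~(2), specialize to $G = \inc(P)$ and define a map $\Phi$ from descent-free $P$-tableaux of shape $n$ to acyclic orientations of $\inc(P)$ by sending a sequence $(a_1,\dotsc,a_n)$ to the orientation that directs each edge $\{a_i, a_j\}$ of $\inc(P)$ with $i < j$ as $a_i \to a_j$. The result is acyclic since every oriented edge points forward in the sequence. The crux is to show that $a_j$ is a source of $\Phi(a_1,\dotsc,a_n)$ if and only if $a_j$ is a record of the tableau. If $a_j$ is a record then each earlier entry $a_i$ satisfies $a_i <_P a_j$, so no earlier entry is incomparable to $a_j$ and $a_j$ has no incoming edges. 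Conversely, if $a_j$ is a source but not a record, let $i < j$ be the largest index with $a_i >_P a_j$; the source property forces every earlier entry to be $P$-comparable to $a_j$, so by the maximality of $i$ we have $a_k <_P a_j$ for $k = i+1,\dotsc,j-1$, producing the consecutive $P$-descent $a_i >_P a_{i+1}$ (or simply $a_{j-1} >_P a_j$ when $i = j-1$) and contradicting the hypothesis.

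The main obstacle is verifying that $\Phi$ is a bijection. I would construct the inverse greedily: given an acyclic $\sigma$, note that any two $\sigma$-sources must be $P$-comparable (any incomparability between them would yield an $\inc(P)$-edge incident to both, contradicting the source property), so the $\sigma$-sources form a $P$-chain and have a unique $P$-minimum. Take $a_1$ to be this minimum, remove it, and iterate on the restricted orientation. The resulting sequence is descent-free because at each step the next chosen element is either a remaining old source (hence $P$-above the just-removed element by minimality) or a source newly exposed by the removal (hence $P$-incomparable to the just-removed element via an edge of $\inc(P)$). Combined with part~(1), this yields~(2).
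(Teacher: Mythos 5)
The paper gives no proof of this proposition, citing Stanley~\cite[Thm.\,3.3]{StanSymm} and Athanasiadis~\cite[Lem.\,4.1]{AthanPSE} instead, so your write-up is a genuine addition rather than a restatement. Your part~(1) is exactly the cited Stanley theorem (note that Stanley phrases it in terms of sinks rather than sources; reversing every orientation is a bijection that shows the two counts agree, so this is only a cosmetic point). For part~(2) you replace the Athanasiadis citation with a self-contained bijection $\Phi$ from descent-free one-row $P$-tableaux to acyclic orientations of $\inc(P)$, matching records to sources, which reduces~(2) to~(1) directly; the argument that records are exactly the sources of $\Phi(a_1,\dotsc,a_n)$ when the sequence is descent-free is correct and carefully done.

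What remains incomplete is the verification that $\Phi$ is a bijection. You construct a candidate inverse $\Psi$ (at each step remove the $P$-minimum among the current sources) and show its output is descent-free, but you do not check that $\Psi$ is actually a two-sided inverse. The identity $\Phi(\Psi(\sigma)) = \sigma$ is immediate, since any source-peeling order reproduces $\sigma$. The nontrivial check is $\Psi(\Phi(a_1,\dotsc,a_n)) = (a_1,\dotsc,a_n)$: one must show that in a descent-free sequence, each $a_k$ is not merely \emph{a} source of $\Phi(a)$ restricted to $\{a_k,\dotsc,a_n\}$ but the $P$-\emph{minimum} among them. This follows by the same reasoning you used for the records-equal-sources claim: if some later $a_m$ were a source of that restriction with $a_m <_P a_k$, then $a_m$ would be $P$-comparable to each of $a_k,\dotsc,a_{m-1}$, and taking the largest $i<m$ with $a_i >_P a_m$ would force the descent $a_i >_P a_{i+1}$ (or $a_{m-1} >_P a_m$ when $i=m-1$), contradicting descent-freeness. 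Adding this paragraph closes the bijection and completes the proof.
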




\section{Applications to total nonnegativity}\label{s:tnn}

Nonnegative expansions of chromatic symmetric functions in
the standard bases
are closely
related to
the immanants defined in (\ref{eq:immdef})
and to certain directed planar graphs.
We will make these relationships precise in Proposition~\ref{p:immincP}
and state some immanantal analogs of results from Section~\ref{s:chrom}.

Define a (nonnegative weighted)
{\em planar network of order $n$} to be a directed, planar, acyclic
digraph $F = (V,E)$
which can be embedded in a disc so that $2n$ distinguished vertices
labeled clockwise as $s_1,\dotsc,s_n,t_n,\dotsc,t_1$
lie on the boundary of the disc,
with a nonnegative real {\em weight} $c_{u,v}$
assigned to each edge $(u,v) \in E$.
We may assume that $s_1,\dotsc,s_n$, called {\em sources}, have indegree $0$
and that $t_n,\dotsc,t_1$, called {\em sinks}, have outdegree $0$.
To every source-to-sink path, we associate a weight equal to the product
of its edge weights,
and we define the {\em path matrix}
$A = A(F) = (a_{i,j})_{i,j \in [n]}$ by
setting $a_{i,j}$ equal to the sum of weights of all paths
from $s_i$ to $t_j$.
For example, one
planar network $F$ of order $3$, with edges weighted by positive numbers
$1, a, \dotsc, h$,
and its path matrix $A$ are

\begin{equation}\label{eq:planarnet}
    F = \ntnsp
\begin{tikzpicture}[scale=.6,baseline=-5]
\draw[fill] (0,2) circle (1mm); \node at (-.5,2) {$s_3$};
\draw[fill] (0,0) circle (1mm); \node at (-.5,0) {$s_2$};
\draw[fill] (0,-2) circle (1mm); \node at (-.5,-2) {$s_1$};
\draw[fill] (3,2) circle (1mm); 
\draw[fill] (2,0) circle (1mm); 
\draw[fill] (2,-2) circle (1mm); 
\draw[fill] (4,0) circle (1mm); 
\draw[fill] (6,2) circle (1mm); \node at (6.5,2) {$t_3$};
\draw[fill] (6,0) circle (1mm); \node at (6.5,0) {$t_2$};
\draw[fill] (6,-2) circle (1mm); \node at (6.5,-2) {$t_1$};
\node at (.8,.35) {$1$};
\node at (.8,-1.65) {$1$};
\node at (1.2,1.25) {$1$};
\node at (1.2,-.75) {$a$};
\node at (1.4,2.35) {$d$};
\node at (2.25,1.25) {$e$};
\node at (2.7,-.75) {$b$};
\node at (3,.35) {$f$};
\node at (3.7,1.25) {$1$};
\node at (4,-1.65) {$1$};
\node at (4.5,2.35) {$g$};
\node at (4.5,-1.15) {$c$};
\node at (4.8,1.25) {$h$};
\node at (5,.35) {$1$};
\draw[->,thick] (0,2) -- (2.85,2);
\draw[->,thick] (0,2) -- (1.88,0.07);
\draw[->,thick] (0,0) -- (1.85,0);
\draw[->,thick] (0,0) -- (1.88,-1.93);
\draw[->,thick] (0,-2) -- (1.85,-2);
\draw[->,thick] (2,0) -- (2.92,1.9);
\draw[->,thick] (2,0) -- (3.85,0);
\draw[->,thick] (2,-2) -- (3.88,-.07);
\draw[->,thick] (2,-2) -- (5.88,-.07);
\draw[->,thick] (2,-2) -- (5.85,-2);
\draw[->,thick] (3,2) -- (5.85,2);
\draw[->,thick] (3,2) -- (3.9,.1);
\draw[->,thick] (4,0) -- (5.88,1.93);
\draw[->,thick] (4,0) -- (5.85,0);
\end{tikzpicture}\ntnsp,
\qquad
A = \begin{bmatrix}
  1 & b+c & \ntnsp bh \\
  a & ab\ntnsp+{\ntnsp}ac\ntnsp+{\ntnsp}e\ntnsp+{\ntnsp}f & \ntnsp abh+fh+eh+eg \\
  0 & e+f & \ntnsp dh\ntnsp+\ntnsp eh\ntnsp+\ntnsp fh\ntnsp+\ntnsp eg\ntnsp+\ntnsp dg
\end{bmatrix}\ntksp.\nTksp
\end{equation}

A result often attributed to Lindstr\"om
~\cite{LinVrep} but proved earlier by Karlin and McGregor~\cite{KMG}
asserts the total nonnegativity of such a matrix.
\begin{thm}\label{t:lin}
  The path matrix $A$ of a nonnegative weighted planar network $F$ of order $n$
  is totally nonnegative.  Moreover,
  the nonnegative number $\det(A)$ equals
  \begin{equation*}
    \sum_\pi \wgt(\pi),
  \end{equation*}
  where the sum is over all families $\pi = (\pi_1,\dotsc,\pi_n)$
  of pairwise nonintersecting paths in $F$,
  with $\pi_i$ a path from $s_i$ to $t_i$ for $i = 1,\dotsc,n$, and where
  \begin{equation}\label{eq:wgt}
    \wgt(\pi) \defeq \wgt(\pi_1) \cdots \wgt(\pi_n).
    \end{equation}
\end{thm}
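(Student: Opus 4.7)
The plan is to prove both assertions simultaneously via the classical Lindström--Gessel--Viennot sign-reversing involution, establishing total nonnegativity by running the determinantal argument on each submatrix $A_{I,J}$ rather than only on $A$ itself.

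First I would expand $\det(A_{I,J})$ for $I = \{i_1 < \cdots < i_k\}$ and $J = \{j_1 < \cdots < j_k\}$ using the Leibniz formula and then substituting the definition $a_{i,j} = \sum_{\pi \colon s_i \to t_j} \wgt(\pi)$. This presents $\det(A_{I,J})$ as a signed sum, indexed by pairs $(\sigma, \pi)$, where $\sigma$ is a permutation of $[k]$ and $\pi = (\pi_1,\dotsc,\pi_k)$ is a tuple of paths with $\pi_r$ running from $s_{i_r}$ to $t_{j_{\sigma(r)}}$, weighted by $\sgn(\sigma)\wgt(\pi)$.

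Next I would construct a sign-reversing, weight-preserving involution on the \emph{intersecting} tuples, i.e.\ those in which two of the paths share a vertex. Given such a tuple, I let $r$ be the smallest index for which $\pi_r$ meets another path, let $s > r$ be the smallest index for which $\pi_r$ and $\pi_s$ meet, and let $v$ be the first common vertex along $\pi_r$. Swapping the tails of $\pi_r$ and $\pi_s$ past $v$ produces a new tuple whose associated permutation is $\sigma \cdot (r\,s)$, whose total weight is identical, and whose sign is reversed. Because the minimal data $(r,s,v)$ are preserved under the swap, this is an involution, and all intersecting contributions cancel in the signed sum.

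What remains is the signed sum over families of pairwise vertex-disjoint paths. Here the planar embedding intervenes: with $s_1,\dotsc,s_n,t_n,\dotsc,t_1$ placed clockwise on the boundary of the disc, a Jordan-curve argument on the embedded digraph forces any nonintersecting family to realize the unique order-preserving bijection between $\{i_1,\dotsc,i_k\}$ and $\{j_1,\dotsc,j_k\}$, so $\sigma$ must be the identity and all remaining signs are $+1$. Hence $\det(A_{I,J})$ reduces to a sum of nonnegative weights of nonintersecting families from $s_{i_r}$ to $t_{j_r}$, which yields total nonnegativity; specializing to $I = J = [n]$ yields the stated formula for $\det(A)$. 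The main obstacle, and the step most in need of care, is this planarity argument: one must rigorously show that every vertex-disjoint realization of a nonidentity $\sigma$ creates a topological crossing incompatible with the planar embedding of $F$ inside the disc.
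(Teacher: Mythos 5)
The paper does not prove this theorem; it is stated as a classical result credited to Karlin--McGregor and Lindstr\"om, with no argument given. Your plan — the Leibniz expansion of $\det(A_{I,J})$, a sign-reversing involution on intersecting path families, and a Jordan-curve argument to force the identity permutation — is the standard Lindstr\"om--Gessel--Viennot route and the right idea. However, the map you define in the second step is not an involution, and in fact not even a bijection, so the cancellation does not go through as written.

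The problem is the order in which you fix the swap data. You first take the smallest index $r$ whose path meets another, then the smallest $s>r$ with $\pi_r\cap\pi_s\neq\emptyset$, and only afterwards the first common vertex $v$ of $\pi_r$ and $\pi_s$. This triple is not stable under the tail swap. Take $k=3$ with
$\pi_1=(s_1,u,t_1)$, $\pi_2=(s_2,z,t_2)$, $\pi_3=(s_3,u,z,t_3)$,
so that $\pi_1\cap\pi_2=\emptyset$, $\pi_1\cap\pi_3=\{u\}$, $\pi_2\cap\pi_3=\{z\}$. Your rule gives $(r,s,v)=(1,3,u)$; swapping tails at $u$ yields $\pi_1'=(s_1,u,z,t_3)$, $\pi_2'=\pi_2$, $\pi_3'=(s_3,u,t_1)$. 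But now $\pi_1'$ meets $\pi_2'$ at $z$, so the rule applied to $\pi'$ gives $(r',s',v')=(1,2,z)$ and sends $\pi'$ to a third family, not back to $\pi$. Worse, the family
$\pi''=\bigl((s_1,u,z,t_2),\,(s_2,z,t_3),\,(s_3,u,t_1)\bigr)$
also has swap data $(1,2,z)$ and also maps to $\pi'$, so the map is not injective; note that $\pi$ and $\pi''$ carry the same sign $(+1)$ while $\pi'$ carries sign $-1$, so two positive terms are being ``cancelled'' against one negative term and the argument collapses.

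The standard fix is to choose $v$ before $s$: having fixed $r$ as the smallest index whose path meets another, let $v$ be the first vertex along $\pi_r$ lying on some other path of the family, and then let $s$ be the smallest index $>r$ with $v\in\pi_s$. The initial segment of $\pi_r$ before $v$ is disjoint from every other path, and the swap alters only $\pi_r$ and $\pi_s$ and only beyond $v$, so the triple $(r,v,s)$ really is preserved and the map is a weight-preserving, sign-reversing involution; in the example above it gives $(1,u,3)$ both before and after the swap. With that correction made, the remainder of your argument — the Leibniz expansion and the planarity/Jordan-curve step forcing the order-preserving bijection on endpoints once the family is vertex-disjoint — is sound and does establish both total nonnegativity and the stated formula for $\det(A)$.
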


\noindent
Thus by inspection of the network $F$ in (\ref{eq:planarnet}),
its path matrix $A$ satisfies $\det(A) = fdg$.

The converse of Theorem~\ref{t:lin} is true as well.
That is,
path matrices are essentially the only examples of totally nonnegative
matrices
~\cite{BrentiCTP}, \cite{CryerProp}, \cite{loewner}, \cite{WReduction}.
%
\begin{thm}\label{t:tnnconv}
  For each $n \times n$ totally nonnegative matrix $A$, there exists
  a nonnegative weighted planar network of order $n$
  whose path matrix is $A$.
\end{thm}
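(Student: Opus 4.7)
The plan is to use the Loewner--Whitney factorization of totally nonnegative matrices into elementary bidiagonal factors (\cite{loewner}, \cite{WReduction}; see also \cite{BrentiCTP}, \cite{CryerProp}), and then realize each factor as a small ``gadget'' inside a planar network. Concatenating gadgets horizontally realizes matrix multiplication of path matrices, so the result will follow by composing the gadgets dictated by the factorization of $A$.

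First I would invoke the Loewner--Whitney theorem, which asserts that every $n \times n$ totally nonnegative matrix $A$ admits a factorization
\begin{equation*}
  A \;=\; L_{i_1}(c_1) \cdots L_{i_p}(c_p) \cdot D \cdot U_{j_1}(d_1) \cdots U_{j_q}(d_q),
\end{equation*}
where $D = \mathrm{diag}(\alpha_1,\dotsc,\alpha_n)$ with $\alpha_k \geq 0$, each elementary lower bidiagonal factor $L_i(c) = I + cE_{i+1,i}$ has $c \geq 0$, and similarly $U_j(d) = I + dE_{j,j+1}$ with $d \geq 0$. This reduces the problem to producing a planar network for each elementary factor and showing that the operation of ``stacking'' planar networks side by side corresponds to matrix multiplication of path matrices.

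Second I would define the three elementary gadgets. The diagonal factor $D$ is realized by $n$ parallel horizontal edges of weights $\alpha_1,\dotsc,\alpha_n$ from $s_k$ to $t_k$. The factor $U_j(d)$ is realized by $n$ parallel horizontal edges of weight $1$, together with a single extra edge of weight $d$ descending from row $j$ to row $j{+}1$ at an intermediate vertex (so a path from $s_j$ can either stay on level $j$ or divert to level $j{+}1$ via the extra edge). The factor $L_i(c)$ is realized symmetrically. By direct inspection, the path matrix of each gadget is exactly the corresponding elementary factor. Next, given two networks $F_1, F_2$ of order $n$, their \emph{horizontal concatenation} $F_1 \circ F_2$ is formed by identifying each sink $t_k$ of $F_1$ with the source $s_k$ of $F_2$. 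Every source-to-sink path in $F_1 \circ F_2$ decomposes uniquely as a path in $F_1$ followed by a path in $F_2$ meeting at a common level, and multiplicativity of path weights yields the identity
\begin{equation*}
  A(F_1 \circ F_2)_{i,k} \;=\; \sum_{j=1}^n A(F_1)_{i,j}\, A(F_2)_{j,k},
\end{equation*}
so $A(F_1 \circ F_2) = A(F_1) A(F_2)$.

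Finally I would let $F$ be the horizontal concatenation, in the order prescribed by the factorization, of the gadgets associated to $L_{i_1}(c_1), \dotsc, L_{i_p}(c_p), D, U_{j_1}(d_1), \dotsc, U_{j_q}(d_q)$. Each gadget is planar, acyclic, and nonnegatively weighted, and the concatenation preserves planarity because each gadget is designed to be drawn in a thin vertical strip. By the concatenation identity, $A(F)$ equals the product of the path matrices of the gadgets, which in turn equals $A$. The main obstacle is the Loewner--Whitney factorization itself; once that is taken as a black box, the planar-network construction and its path-matrix identification are essentially bookkeeping. A secondary point worth checking carefully is that the concatenation retains a valid planar embedding with the correct cyclic ordering $s_1,\dotsc,s_n,t_n,\dotsc,t_1$ of boundary vertices, which is immediate from the fact that each gadget occupies a strip with its own source and sink vertices aligned on opposite sides.
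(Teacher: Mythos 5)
The paper does not prove this theorem; it cites \cite{BrentiCTP}, \cite{CryerProp}, \cite{loewner}, \cite{WReduction} and uses the statement as a black box. Your proposal reconstructs the standard argument from those references: factor $A$ into elementary bidiagonal and diagonal totally nonnegative matrices, realize each factor by a thin planar gadget, and concatenate horizontally, which multiplies path matrices. This is the right route, and the gadget/concatenation analysis is sound.

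However, the exact factorization you invoke needs a small repair. Loewner--Whitney in the strict form $L_{i_1}(c_1)\cdots L_{i_p}(c_p)\,D\,U_{j_1}(d_1)\cdots U_{j_q}(d_q)$ is a statement about \emph{invertible} totally nonnegative matrices and genuinely fails for singular ones. For $n=2$, take $A$ with $a_{2,1}=1$ and all other entries $0$: this matrix is totally nonnegative, but in any product $L_1(c)\,D\,U_1(d)$ the $(1,1)$ entry equals the first diagonal entry $\alpha_1$ of $D$, so $\alpha_1=0$, which in turn forces the $(2,1)$ entry $c\alpha_1$ to vanish. Cryer's extension to general (possibly singular) $A$ yields a product of elementary bidiagonal and diagonal factors in which the diagonal factors may be interleaved among the $L$'s and $U$'s; for instance $A = \mathrm{diag}(0,1)\,L_1(1)\,\mathrm{diag}(1,0)$ in the example above. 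Your gadget construction handles this more general ordering with no changes (zero-weight edges are perfectly legal in a planar network), so the fix is simply to state and use the factorization in this flexible form, crediting Cryer rather than Loewner--Whitney for the singular case.
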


Also belonging to the subject of total nonnegativity are polynomial functions
\begin{equation*}
  f(x)
  \defeq
  f(x_{1,1}, x_{1,2}, \dotsc, x_{n,n}) \in
  \mathbb Z[x_{1,1},x_{1,2},\dotsc,x_{n,n}]
  \end{equation*}
having the property that
\begin{equation*}
  f(A) \defeq f(a_{1,1},a_{1,2},\dotsc,a_{n,n}) \geq 0
\end{equation*}
for all totally nonnegative matrices $A = (a_{i,j})$.
Interest in such polynomials comes from the fact that
elements of a certain {\em dual canonical basis} of
$\mathbb Z[x_{1,1}, x_{1,2}, \dotsc, x_{n,n}]$ have this property~\cite{LusztigTP}.
(See also \cite{RSkanKLImm}.)
Certainly subtraction-free polynomials such as
$\imm{\psi^\lambda}(x)$
are totally nonnegative.
(See (\ref{eq:psidef}).)
Sums of products of minors such as $\imm{\epsilon^\lambda}(x)$ in (\ref{eq:lmw})
are as well, as are the analogous sums of products of permanents
~\cite{LittlewoodTGC}, \cite{MerWatIneq}
\begin{equation}\label{eq:lmw2}
  \imm{\eta^\lambda}(A) = \sum \perm(A_{I_1,I_1}) \cdots \perm(A_{I_\ell,I_\ell}).
\end{equation}
The total nonnegativity of other polynomials is less obvious.
For instance, Stembridge showed that all character immanants are
totally nonnegative~\cite[Cor.\,3.3]{StemImm}.
\begin{thm}\label{t:stemimm}
  For $\lambda \vdash n$
  the polynomial $\imm{\chi^\lambda}(x)$ is totally nonnegative.
\end{thm}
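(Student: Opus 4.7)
The plan is to reduce the claim to a combinatorial statement about planar networks and then exhibit $\imm{\chi^\lambda}(A)$ as a nonnegative sum whose positivity follows from the representation theory of $\sn$. By Theorem~\ref{t:tnnconv}, every $n \times n$ totally nonnegative matrix has the form $A = A(F)$ for some nonnegative weighted planar network $F$ of order $n$, so it suffices to prove $\imm{\chi^\lambda}(A(F)) \geq 0$ for all such $F$.

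Once $A = A(F)$, I would expand each entry $a_{i,j}$ as a sum of weights of $s_i$-to-$t_j$ paths and interchange the resulting summations. The definition (\ref{eq:immdef}) becomes
\begin{equation*}
  \imm{\chi^\lambda}(A) = \sum_\pi \chi^\lambda(\sigma(\pi))\,\wgt(\pi),
\end{equation*}
where $\pi = (\pi_1,\dotsc,\pi_n)$ ranges over all families of source-to-sink paths in $F$ and $\sigma(\pi) \in \sn$ is the permutation for which $\pi_i$ terminates at $t_{\sigma(\pi)(i)}$. I would then group these families by their common \emph{flow} $\phi$, the multiset of edges used counted with multiplicity; all $\pi$ with $\mathrm{flow}(\pi) = \phi$ share a common weight $\wgt(\phi) \geq 0$, so the claim reduces to showing
\begin{equation*}
  N_\phi(\lambda) \defeq \ntksp \sum_{\pi\,:\,\mathrm{flow}(\pi) = \phi} \ntksp
  \chi^\lambda(\sigma(\pi)) \geq 0
\end{equation*}
for every realizable flow $\phi$.

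The concluding step is to identify $N_\phi(\lambda)$ with a nonnegative representation-theoretic invariant. At each interior vertex of $\phi$ the paths may be repaired by locally matching incoming to outgoing segments, and because $F$ is planar these local choices combine into a set of permutations with a very restricted form --- in the simplest case, a single parabolic double coset of $\sn$, or more generally an intersection of such cosets. By Frobenius reciprocity, the sum of an irreducible character $\chi^\lambda$ over such a set is a positive scalar multiple of the multiplicity of $S^\lambda$ inside a suitable induced representation, hence a nonnegative integer. Summing over flows then gives $\imm{\chi^\lambda}(A) \geq 0$.

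The main obstacle is the structural lemma in the final step: one must carefully characterize which permutations $\sigma(\pi)$ arise from a given flow under the planarity hypothesis, and match the character sum $N_\phi(\lambda)$ with a genuinely nonnegative quantity. Planarity is essential here, as it is precisely what forces the realizable permutation set into a shape to which Frobenius reciprocity applies cleanly; without it one could obtain all of $\sn$ from a single flow, and $\chi^\lambda$ summed over $\sn$ is only nonnegative for trivial reasons that do not help weight by weight. Once the structural lemma is in place, the rest of the argument is routine bookkeeping.
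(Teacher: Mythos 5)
The paper does not prove Theorem~\ref{t:stemimm}; it cites Stembridge~\cite{StemImm}, so your proposal should be measured against Stembridge's argument. Your first two reductions match his exactly: pass to a path matrix of a planar network via Theorem~\ref{t:tnnconv}, expand (\ref{eq:immdef}) over path families, and group by the common multiset of edges (your ``flow'', the paper's bijective skeleton $K$), reducing to showing
\begin{equation*}
  \chi^\lambda\bigg(\sum_{\pi \in \Pi_e(K)} \sigma(\pi)\bigg) \geq 0
\end{equation*}
for each bijective skeleton $K$.

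The final step is where the proposal breaks down, and not merely for lack of bookkeeping. First, the multiset $\{\sigma(\pi) : \pi \in \Pi_e(K)\}$ is generally not a parabolic double coset, nor an intersection of such: it arises as a product over interior vertices of local rearrangements, and the coefficients of $\sum_\pi \sigma(\pi) \in \zsn$ are non-uniform positive integers, whereas a double coset indicator has coefficients in $\{0,1\}$. Second, and fatally, the sum of an irreducible character over a parabolic double coset can be strictly negative, so the Frobenius reciprocity claim cannot be salvaged even where the structure is as you describe. In $\mfs 3$ with $\lambda = (2,1)$, take $W_I = W_J = \{e, s_1\}$ and $w = s_2$; the double coset $W_I w W_J = \{s_2,\, s_1 s_2,\, s_2 s_1,\, s_1 s_2 s_1\}$ has character values $0, -1, -1, 0$ under $\chi^{21}$, summing to $-2$. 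Frobenius reciprocity does give nonnegativity for the sum over a parabolic \emph{subgroup}, but not over a genuine double coset, and a general flow produces the latter.

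What Stembridge actually proves at this juncture is structurally different. He shows that the flow element $\sum_{\pi \in \Pi_e(K)} \sigma(\pi)$ factors in $\zsn$, with the factors ordered to respect the left-to-right placement of the interior vertices, into group-algebra elements each supported on a set of consecutive wires, and then establishes positivity by linear algebra rather than by reciprocity: in Young's orthogonal basis the representation matrix of each factor is totally nonnegative, a product of totally nonnegative matrices is totally nonnegative, and the trace of a totally nonnegative square matrix is nonnegative. This matrix-positivity lemma is the genuine content your sketch is missing; without it the reduction to flows is correct but does not close.
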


For some totally nonnegative polynomials $\imm\theta(x)$,
one can combinatorially interpret the
evaluation
$\imm\theta(A)$ when $A$ is a totally nonnegative matrix.
Such an interpretation typically employs
a planar network $F$ having path matrix $A$,
guaranteed to exist by Theorem~\ref{t:tnnconv},
and families of paths in $F$
from all sources
to all sinks.
In particular, for a multiset $K$ of edges of $F$, let
$\Pi_e(K)$ denote
the set of all
path families
$\pi = (\pi_1,\dotsc,\pi_n)$
with $\pi_i$ a path from source $i$ to sink $i$,
whose multiset of edges is $K$.
Call $K$ a {\em bijective skeleton in $F$} if $\Pi_e(K)$ is nonempty.
Define $\wgt(K)$ to be the product of weights of edges in $K$,
with multiplicities, so that $\wgt(\pi) = \wgt(K)$ for all $\pi \in \Pi_e(K)$.
For each path family $\pi \in \Pi_e(K)$, define the poset
$P = P(\pi)$ by declaring $\pi_i < \pi_j$ if
$i < j$ as integers
and
$\pi_i$ does not intersect $\pi_j$.
We will refer to the union of $P(\pi)$-tableaux, over all
path families $\pi$ covering a bijective skeleton of $F$,
\begin{equation}\label{eq:Ftableaux}
  \{ \text{$U$ a $P(\pi)$-tableau} \,|\,
  \pi \in \Pi_e(K), \text{ $K$ a bijective skeleton in $F$} \}
\end{equation}
as the set of {\em $F$-tableaux}.  These are fillings of Young diagrams
with path families in $F$.
For $F$-tableau $U$ containing path family $\pi \in \Pi_e(K)$,
we define $\wgt(U) \defeq \wgt(K)$.

For example, consider the network $F$ in (\ref{eq:planarnet})
and three multisets of edges
  \begin{equation}\label{eq:3multisets}
  K_1 =
\begin{tikzpicture}[scale=.7,baseline=-5]
\draw[gray,-,thin,densely dotted] (0,1) -- (3,1);
\draw[-,ultra thick] (0,0) -- (2,0) -- (3,1);
\draw[-,ultra thick] (0,-1) -- (3,-1);
\draw[gray,-,thin,densely dotted] (0,0) -- (1,-1) -- (3,0);
\draw[gray,-,thin,densely dotted] (1,-1) -- (3,1);
\draw[-,ultra thick] (0,1) -- (1,0) -- (1.5,1) -- (2,0) -- (3,0);
\end{tikzpicture}\,,
\qquad
K_2 
=
\begin{tikzpicture}[scale=.7,baseline=-5]
\draw[gray,-,thin,densely dotted] (0,1) -- (3,1);
\draw[-,ultra thick] (1,0) -- (2,0) -- (3,1);
\draw[-,ultra thick] (0,0) -- (1,-1) -- (2,0);
\draw[-,ultra thick] (0,-1) -- (3,-1);
\draw[gray,-,thin,densely dotted] (0,0) -- (1,-1) -- (3,0);
\draw[gray,-,thin,densely dotted] (1,-1) -- (3,1);
\draw[-,ultra thick] (0,1) -- (1,0) -- (3,0);
\draw[gray,-,thin,densely dotted] (1,0) -- (1.5,1) -- (2,0);
\node at (1.5,-.8) {$\phantom{(2)}$};
\node at (1.5,.8) {$\phantom{(2)}$};  
\end{tikzpicture}\,,
\qquad
K_3
=
\begin{tikzpicture}[scale=.7,baseline=-5]
\draw[gray,-,thin,densely dotted] (0,1) -- (3,1);
\draw[-,ultra thick] (0,0) -- (2,0) -- (3,1);
\draw[-,ultra thick] (0,-1) -- (3,-1);
\draw[gray,-,thin,densely dotted] (0,0) -- (1,-1) -- (3,0);
\draw[gray,-,thin,densely dotted] (1,-1) -- (3,1);
\draw[-,ultra thick] (0,1) -- (1,0) -- (3,0);
\draw[gray,-,thin,densely dotted] (1,0) -- (1.5,1) -- (2,0);
\node at (1.5,.3) {$_{(2)}$};  
\end{tikzpicture}\,,
\end{equation}
  where the marked edge in $K_3$ has
  multiplicity $2$.
  The multisets have weights
  $\wgt(K_1) = feh$,
  $\wgt(K_2) = abfh$,
  $\wgt(K_3) = f^2h$.
The path families $\pi$, $\rho$, $\sigma$, $\tau$ defined by
\begin{equation}\label{eq:4pathfams}
\begin{tikzpicture}[scale=.7,baseline=-5]
\draw[gray,-,thin,densely dotted] (0,1) -- (3,1);
\draw[-,ultra thick] (0,-.0) -- (2,-.0) -- (3,-.0);
\draw[-,thick,dashed] (0,-1) -- (3,-1);
\draw[gray,-,thin,densely dotted] (0,0) -- (1,-1) -- (3,0);
\draw[gray,-,thin,densely dotted] (1,-1) -- (3,1);
\draw[-,ultra thick, dotted] (0,1) -- (1,0) -- (1.5,1) -- (2,0) -- (3,1);
  \node at (-.5,1) {$\pi_3$};  
  \node at (-.5,0) {$\pi_2$};  
  \node at (-.5,-1) {$\pi_1$};  
\end{tikzpicture}\,,
\qquad
\begin{tikzpicture}[scale=.7,baseline=-5]
\draw[gray,-,thin,densely dotted] (0,1) -- (3,1);
\draw[-,ultra thick] (0,-.0) -- (1,-0.0) -- (1.5,1) -- (2,-0.0) -- (3,-.0);
\draw[-,thick,dashed] (0,-1) -- (3,-1);
\draw[gray,-,thin,densely dotted] (0,0) -- (1,-1) -- (3,0);
\draw[gray,-,thin,densely dotted] (1,-1) -- (3,1);
\draw[-,ultra thick, dotted] (0,1) -- (1,0.0) -- (2,0.0) -- (3,1);
  \node at (-.5,1) {$\rho_3$};  
  \node at (-.5,0) {$\rho_2$};  
  \node at (-.5,-1) {$\rho_1$};  
\end{tikzpicture}\,,
\qquad
\begin{tikzpicture}[scale=.7,baseline=-5]
\draw[gray,-,thin,densely dotted] (0,1) -- (3,1);
\draw[gray,-,thin,densely dotted] (0,0) -- (1,0);
\draw[-,thick,dashed] (0,-1) -- (3,-1);
\draw[gray,-,thin,densely dotted] (0,0) -- (1,-1) -- (3,0);
\draw[-,ultra thick] (0,0) -- (1,-1) -- (2,0) -- (3,0);
\draw[-,ultra thick, dotted] (0,1) -- (1,0) -- (2,0) -- (3,1);
\draw[gray,-,thin,densely dotted] (1,0) -- (1.5,1) -- (2,0);
  \node at (-.5,1) {$\sigma_3$};  
  \node at (-.5,0) {$\sigma_2$};  
  \node at (-.5,-1) {$\sigma_1$};  
\end{tikzpicture}\,,
\qquad
\begin{tikzpicture}[scale=.7,baseline=-5]
\draw[gray,-,thin,densely dotted] (0,1) -- (3,1);
\draw[-,ultra thick] (0,-.035) -- (2,-.035) -- (3,-0.035);
\draw[-,thick,dashed] (0,-1) -- (3,-1);
\draw[gray,-,thin,densely dotted] (0,0) -- (1,-1) -- (3,0);
\draw[gray,-,thin,densely dotted] (1,-1) -- (3,1);
\draw[-,ultra thick, dotted] (0,1) -- (1,0.035) -- (2,0.035) -- (3,1);
\draw[gray,-,thin,densely dotted] (1,0) -- (1.5,1) -- (2,0);
  \node at (-.5,1) {$\tau_3$};  
  \node at (-.5,0) {$\tau_2$};  
  \node at (-.5,-1) {$\tau_1$};  
\end{tikzpicture}
\end{equation}
satisfy $\Pi_e(K_1) = \{ \pi, \rho \}$,
$\Pi_e(K_2) = \{ \sigma \}$,
$\Pi_e(K_3) = \{ \tau \}$,
and have posets
\begin{equation*}
P(\pi) = \ntksp
\begin{tikzpicture}[scale=.7,baseline=-5]
\draw[fill] (0,.5) circle (1mm); \node at (0,1) {$\pi_2$};
\draw[fill] (.4,-.5) circle (1mm); \node at (.4,-1) {$\pi_1$};
\draw[fill] (.8,.5) circle (1mm); \node at (.8,1) {$\pi_3$};
\draw[-,thick] (0,.5) -- (.4,-.5);
\draw[-,thick] (.8,.5) -- (.4,-.5);
\end{tikzpicture}\ntksp,
\qquad
P(\rho) = \ntksp
\begin{tikzpicture}[scale=.7,baseline=-5]
\draw[fill] (0,.5) circle (1mm); \node at (0,1) {$\rho_2$};
\draw[fill] (.4,-.5) circle (1mm); \node at (.4,-1) {$\rho_1$};
\draw[fill] (.8,.5) circle (1mm); \node at (.8,1) {$\rho_3$};
\draw[-,thick] (0,.5) -- (.4,-.5);
\draw[-,thick] (.8,.5) -- (.4,-.5);
\end{tikzpicture}\ntksp,
\qquad
P(\sigma) = \ntnsp
\begin{tikzpicture}[scale=.7,baseline=-5]
\draw[fill] (.8,.5) circle (1mm); \node at (.8,1) {$\sigma_3$};
\draw[fill] (.8,-.5) circle (1mm); \node at (.8,-1) {$\sigma_1$};
\draw[fill] (1.6,0) circle (1mm); \node at (1.6,-.5) {$\sigma_2$};
\draw[-,thick] (.8,.5) -- (.8,-.5);
\end{tikzpicture}\ntksp,
\qquad 
P(\tau) = \ntksp
\begin{tikzpicture}[scale=.7,baseline=-5]
\draw[fill] (0,.5) circle (1mm); \node at (0,1) {$\tau_2$};
\draw[fill] (.4,-.5) circle (1mm); \node at (.4,-1) {$\tau_1$};
\draw[fill] (.8,.5) circle (1mm); \node at (.8,1) {$\tau_3$};
\draw[-,thick] (0,.5) -- (.4,-.5);
\draw[-,thick] (.8,.5) -- (.4,-.5);
\end{tikzpicture}\ntksp.
\end{equation*}
The standard $F$-tableaux
\begin{equation*}
  \tableau[scY]{\pi_3|\pi_1,\pi_2}\,,\qquad
  \tableau[scY]{\rho_3|\rho_1,\rho_2}\,,\qquad
  \tableau[scY]{\sigma_3|\sigma_1,\sigma_2}\,,\qquad
  \tableau[scY]{\tau_3|\tau_1,\tau_2}  
\end{equation*}
have weights $feh$, $feh$, $abfh$, $f^2e$, respectively.

For an $n \times n$ totally nonnegative matrix $A$
and a trace $\theta \in \trspace n$, 
we may compute $\imm \theta(A)$ by considering
a planar network $F$ having path matrix $A$,
the union over bijective skeletons $K$ of
path families $\pi \in \Pi_e(K)$,
and the corresponding
chromatic symmetric functions $X_{\inc(P(\pi))}$.
Specifically, we have the following~\cite[Cor.\,4.6]{SkanCCS}.
\begin{prop}\label{p:immincP}
  For $F$ a planar network having path matrix $A$ and $\theta \in \trspace n$,
  we have
\begin{equation}\label{eq:propcorcomb}
  \imm{\theta}(A) = \sum_K \wgt(K)
  \nTksp \ntnsp \sum_{\pi \in \Pi_e(K)} \nTksp \theta(\inc(P(\pi))),
\end{equation}
where $K$ varies over all bijective skeletons in $F$.  If
for all posets $P$, $\theta(\inc(P))$ counts $P$-tableaux
having a particular property,
then we have
\begin{equation}\label{eq:propcorcomb2}
  \imm{\theta}(A) = \sum_U \wgt(U),
\end{equation}
where the sum is over $F$-tableaux $U$ having the property.
\end{prop}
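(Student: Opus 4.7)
The plan is to prove \eqref{eq:propcorcomb} first, then derive \eqref{eq:propcorcomb2} by direct substitution. Both sides of \eqref{eq:propcorcomb} are $\mathbb{Z}$-linear in $\theta \in \trspace n$: the left side from the definition \eqref{eq:immdef}, and the right side via Observation~\ref{o:yyx}, which gives $\theta(\inc(P(\pi))) = \sum_\lambda a_\lambda c(\inc(P(\pi)),\lambda)$ whenever $\theta = \sum_\lambda a_\lambda \epsilon^\lambda$. Since \eqref{eq:thetainepsilonspan} asserts that $\{\epsilon^\lambda \,|\, \lambda \vdash n\}$ is a $\mathbb{Z}$-basis of $\trspace n$, it suffices to verify \eqref{eq:propcorcomb} in the case $\theta = \epsilon^\lambda$ for arbitrary $\lambda \vdash n$.

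For the left side in that case, I would apply the Littlewood--Merris--Watkins identity \eqref{eq:lmw} to write
\begin{equation*}
\imm{\epsilon^\lambda}(A) = \sum_{(I_1,\dotsc,I_\ell)} \det(A_{I_1,I_1}) \cdots \det(A_{I_\ell,I_\ell}),
\end{equation*}
the sum ranging over ordered partitions $(I_1,\dotsc,I_\ell)$ of $[n]$ with $|I_j| = \lambda_j$. By Theorem~\ref{t:lin}, each factor $\det(A_{I_j,I_j})$ equals the sum of $\wgt(\pi^{(j)})$ over families $\pi^{(j)}$ of pairwise non-intersecting paths $\pi^{(j)}_i : s_i \to t_i$ for $i \in I_j$, since the identity pairing is the only one yielding a non-intersecting family within $I_j$.

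For the right side with $\theta = \epsilon^\lambda$, Theorem~\ref{t:epsilonevals}(1) identifies $\epsilon^\lambda(\inc(P(\pi)))$ with the number $c(\inc(P(\pi)),\lambda)$ of proper colorings of $\inc(P(\pi))$ of type $\lambda$. Since $\wgt(K) = \wgt(\pi)$ for every $\pi \in \Pi_e(K)$, the right side of \eqref{eq:propcorcomb} becomes
\begin{equation*}
\sum_{(I_1,\dotsc,I_\ell)} \sum_\pi \wgt(\pi),
\end{equation*}
where the inner sum is over path families with $\pi_i : s_i \to t_i$ such that the sub-family $\{\pi_i\}_{i \in I_j}$ is non-intersecting for each $j$. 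These conditions decouple across $j$, so the inner sum factors as $\prod_j \det(A_{I_j,I_j})$ by a second application of Theorem~\ref{t:lin} to each class independently. This matches the left-side expansion, establishing \eqref{eq:propcorcomb}.

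For \eqref{eq:propcorcomb2}, substituting the given combinatorial interpretation of $\theta(\inc(P(\pi)))$ into \eqref{eq:propcorcomb} replaces each trace evaluation by a count of $P(\pi)$-tableaux with the property, each inheriting weight $\wgt(\pi)$; the resulting triple sum over bijective skeletons $K$, path families $\pi \in \Pi_e(K)$, and $P(\pi)$-tableaux with the property collapses to the single sum over $F$-tableaux from \eqref{eq:Ftableaux} having the property. The main obstacle is confirming that non-intersection within each color class pins the local pairing to the identity so that Theorem~\ref{t:lin} legitimately yields $\det(A_{I_j,I_j})$ (and not some off-diagonal minor), and that the double expansions on the two sides correspond bijectively; both hinge on Lindström's theorem applied within each color class.
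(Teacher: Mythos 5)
The paper does not prove this proposition; it cites it from \cite[Cor.\,4.6]{SkanCCS}. Your argument is correct and uses exactly the ingredients the present paper makes available: $\mathbb R$-linearity of both sides in $\theta$ (and the fact that $\{\epsilon^\lambda\}$ is a basis of $\trspace n$, which is stated in Section~\ref{s:intro}; equation~(\ref{eq:thetainepsilonspan}) itself only records inclusions and does not by itself assert basis-ness, so cite the basis assertion rather than that display), the Littlewood--Merris--Watkins identity (\ref{eq:lmw}), the Lindstr\"om/Karlin--McGregor theorem, and the identification $\epsilon^\lambda(\inc(P(\pi))) = c(\inc(P(\pi)),\lambda)$ from Theorem~\ref{t:epsilonevals}(1). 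The key interchange --- summing over path families $\pi$ and then over ordered color classes $(I_1,\dotsc,I_\ell)$ with each class nonintersecting, versus summing over $(I_1,\dotsc,I_\ell)$ and then over class-by-class nonintersecting subfamilies --- is carried out correctly, and the factorization of the inner sum into $\prod_j \det(A_{I_j,I_j})$ is legitimate precisely because the nonintersection constraints decouple across classes.

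One point worth tightening: Theorem~\ref{t:lin} as stated gives the path interpretation only of $\det(A)$, whereas you invoke it for the principal minors $\det(A_{I_j,I_j})$. This extension is standard (restrict the sources and sinks to $\{s_i, t_i : i\in I_j\}$ and apply Lindstr\"om--Gessel--Viennot there; planarity with the boundary ordering forces the identity pairing), but you should say so explicitly rather than attribute it directly to the stated theorem. The collapse from the triple sum to $\sum_U \wgt(U)$ in the last paragraph is also correct, since an $F$-tableau is by definition (\ref{eq:Ftableaux}) a $P(\pi)$-tableau for a specific $\pi$ (hence a specific $K$), so no overcounting occurs and $\wgt(U)=\wgt(K)$ is well defined.
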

Proposition~\ref{p:immincP}
has the following consequence~\cite[Cor.\,4.7]{SkanCCS}.
\begin{cor}\label{c:immincP}
  If $\theta \in \trspace n$ satisfies
  $\theta(\inc(P)) \geq 0$ for all posets $P$, then
the polynomial $\imm\theta(x)$ is totally nonnegative.
\end{cor}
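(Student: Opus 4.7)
The plan is to combine the structural machinery already assembled in the paper: Theorem~\ref{t:tnnconv} guarantees that every totally nonnegative matrix arises as the path matrix of a nonnegative weighted planar network, and Proposition~\ref{p:immincP} converts the immanant evaluated at such a matrix into a weighted sum of trace evaluations on incomparability graphs of posets $P(\pi)$. Once both ingredients are in hand, positivity of $\imm\theta(x)$ on totally nonnegative matrices reduces to termwise positivity of a combinatorial expansion.

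Concretely, I would fix a totally nonnegative matrix $A = (a_{i,j})$ and let $F$ be a nonnegative weighted planar network of order $n$ whose path matrix equals $A$, which exists by Theorem~\ref{t:tnnconv}. Applying Proposition~\ref{p:immincP} to $F$ and the trace $\theta$ yields the identity
\begin{equation*}
  \imm{\theta}(A) = \sum_K \wgt(K) \sum_{\pi \in \Pi_e(K)} \theta(\inc(P(\pi))),
\end{equation*}
where the outer sum ranges over bijective skeletons $K$ of $F$. Each weight $\wgt(K)$ is a product of nonnegative edge weights of $F$, and is therefore nonnegative. By the hypothesis placed on $\theta$, every inner summand $\theta(\inc(P(\pi)))$ is also nonnegative, since $P(\pi)$ is a poset on $n$ elements. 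Hence the right-hand side is a sum of nonnegative terms, giving $\imm\theta(A) \geq 0$.

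Since this holds for every totally nonnegative $A$, the polynomial $\imm\theta(x) \in \mathbb{Z}[x_{1,1},\dotsc,x_{n,n}]$ is totally nonnegative by definition. There is no real obstacle here: the work has all been done in assembling Proposition~\ref{p:immincP} (which in turn rests on the chromatic/trace dictionary of Section~\ref{s:chrom}) and in the Lindström--Karlin--McGregor converse (Theorem~\ref{t:tnnconv}); the corollary is essentially an immediate consequence of pairing those two tools with the nonnegativity hypothesis on $\theta$.
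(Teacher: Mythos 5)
Your argument is correct and is exactly the intended derivation: the paper presents Corollary~\ref{c:immincP} as an immediate consequence of Proposition~\ref{p:immincP} (after invoking Theorem~\ref{t:tnnconv} to realize $A$ as a path matrix), with the nonnegativity of each $\wgt(K)$ and each $\theta(\inc(P(\pi)))$ doing the rest. There is nothing to add.
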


By Theorems~\ref{t:epsilonevals} -- \ref{t:KaliGash},
Corollary~\ref{c:immincP} applies to
induced sign character immanants,
induced trivial character immanants,
and irreducible character immanants indexed by hook partitions.
It does not apply to
irreducible character immanants in general, because we have
$\chi^\lambda(\inc(P)) < 0$ for some $\lambda$, $P$.
\begin{thm}\label{t:hookimm}
  For $k \leq n$,
  the polynomial $\imm{\chi^{k1^{n-k}}}(x)$ is totally nonnegative.
    In particular, for $A$ the path matrix of planar network $F$, we have
    \begin{equation*}
      \imm{\chi^{k1^{n-k}}}(A) = \sum_U \wgt(U),
    \end{equation*}
    where the sum is over all standard $F$-tableaux of shape $k1^{n-k}$.
\end{thm}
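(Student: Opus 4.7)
The proof will be essentially a direct application of the machinery already assembled in Sections~\ref{s:chrom} and~\ref{s:tnn}. The plan is to verify that $\chi^{k1^{n-k}}$ satisfies the hypothesis of Corollary~\ref{c:immincP} and then to read off the combinatorial formula from Proposition~\ref{p:immincP}.

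First, I would invoke Theorem~\ref{t:KaliGash}(1): for any $n$-element poset $P$ and any hook partition $\lambda = k1^{n-k} \vdash n$, the evaluation $\chi^{k1^{n-k}}(\inc(P))$ equals the number of standard $P$-tableaux of shape $k1^{n-k}$. In particular this number is a nonnegative integer for every $P$, which is precisely the hypothesis required by Corollary~\ref{c:immincP}. That corollary then immediately yields total nonnegativity of the polynomial $\imm{\chi^{k1^{n-k}}}(x)$, establishing the first assertion of the theorem.

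For the combinatorial formula, I would apply the second half of Proposition~\ref{p:immincP}. Since Theorem~\ref{t:KaliGash}(1) tells us that for every poset $P$ the evaluation $\chi^{k1^{n-k}}(\inc(P))$ counts $P$-tableaux with the particular property of being standard of shape $k1^{n-k}$, the ``particular property'' hypothesis of Proposition~\ref{p:immincP} is satisfied. Applying (\ref{eq:propcorcomb2}) with this property then gives
\begin{equation*}
  \imm{\chi^{k1^{n-k}}}(A) = \sum_U \wgt(U),
\end{equation*}
where the sum ranges over $F$-tableaux $U$ that are standard of shape $k1^{n-k}$ in their respective posets $P(\pi)$, i.e., over all standard $F$-tableaux of shape $k1^{n-k}$ in the sense of (\ref{eq:Ftableaux}).

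Since every ingredient is already packaged by earlier results, there is no real obstacle; the only substantive content is recognizing that the hook case of the Gasharov/Kaliszewski-type theorem is exactly the nonnegativity input required by Corollary~\ref{c:immincP}. The one sentence worth flagging is the verification that the combinatorial property ``standard of shape $k1^{n-k}$'' is genuinely a property of a $P$-tableau in the uniform sense demanded by Proposition~\ref{p:immincP} (the notion of ``standard'' depends on $P$ only through its order relation, as required), so that the formula $\imm{\chi^{k1^{n-k}}}(A) = \sum_U \wgt(U)$ may indeed be interpreted as a sum over $F$-tableaux rather than poset-by-poset.
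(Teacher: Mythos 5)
Your proposal is correct and takes essentially the same approach as the paper. In fact the paper states Theorem~\ref{t:hookimm} without a separate proof environment, simply remarking in the preceding paragraph that Theorem~\ref{t:KaliGash}(1) combined with Corollary~\ref{c:immincP} (and, for the explicit formula, Proposition~\ref{p:immincP}) applies to hook-indexed irreducible character immanants; your proposal spells out exactly that chain of reasoning.
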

\bp\label{p:stemimm}
Combinatorially interpret the numbers $\imm{\chi^\lambda}(A)$ in
Theorem~\ref{t:stemimm}.
\ep

Corollary~\ref{c:immincP} also does not apply to monomial traces,
which satisfy $\phi^\lambda(\inc(P)) < 0$ for some $\lambda$, $P$.
Nevertheless, Stembridge conjectured that monomial trace immanants are totally
nonnegative~\cite[Conj.\,2.1]{StemConj}.
\begin{conj}\label{c:monimm}
  For $\lambda \vdash n$ the polynomial $\imm{\phi^\lambda}(x)$ is totally
  nonnegative.
\end{conj}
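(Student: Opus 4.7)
The plan is to pass to a planar-network representation of the totally nonnegative matrix and then reduce the conjecture to a skeleton-by-skeleton nonnegativity claim. Given an arbitrary $n \times n$ totally nonnegative matrix $A$, Theorem~\ref{t:tnnconv} supplies a nonnegative weighted planar network $F$ of order $n$ with $A = A(F)$. Applying Proposition~\ref{p:immincP} to the trace $\theta = \phi^\lambda$ yields
\begin{equation*}
  \imm{\phi^\lambda}(A) = \sum_K \wgt(K) \sum_{\pi \in \Pi_e(K)} \phi^\lambda(\inc(P(\pi))),
\end{equation*}
where $K$ ranges over bijective skeletons of $F$ and every $\wgt(K) \geq 0$. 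Thus it suffices to prove that for each fixed skeleton $K$,
\begin{equation*}
  \Sigma_K \defeq \sum_{\pi \in \Pi_e(K)} \phi^\lambda(\inc(P(\pi))) \geq 0.
\end{equation*}
Corollary~\ref{c:immincP} does not apply directly, since $\phi^\lambda(\inc(P))$ can be negative for arbitrary posets.

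The first attempt would be to establish a purely structural claim: every poset $P(\pi)$ associated to a path family in a planar network is $(\mathbf{3}+\mathbf{1})$-free. The disc embedding with clockwise-ordered sources $s_1,\dotsc,s_n$ and sinks $t_n,\dotsc,t_1$ forces any three pairwise vertex-disjoint members of $\pi$ to nest in the plane, and a planarity/acyclicity argument should rule out the coexistence of such a chain with a fourth path sharing at least one vertex with each member. If this structural claim can be pushed through, then Theorem~\ref{t:hikita} gives $\phi^\lambda(\inc(P(\pi))) \geq 0$ for every $\pi \in \Pi_e(K)$, and $\Sigma_K \geq 0$ follows termwise.

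If the structural claim fails for some configurations, the fallback is to construct a sign-reversing involution on $\Pi_e(K)$. The natural design pairs each ``bad'' path family $\pi$ (whose poset $P(\pi)$ contains a copy of $\mathbf{3}+\mathbf{1}$) with a partner $\pi'$ obtained by a local edge-swap at the earliest witnessing shared vertex, arranged so that $\phi^\lambda(\inc(P(\pi))) + \phi^\lambda(\inc(P(\pi'))) \geq 0$. The fixed points of this involution would be precisely the families whose posets are $(\mathbf{3}+\mathbf{1})$-free, and Theorem~\ref{t:hikita} would then finish the proof.

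The hardest part is that Hikita's proof of Theorem~\ref{t:hikita} is essentially algebro-geometric and supplies no combinatorial formula for $\phi^\lambda(\inc(P))$; without such a formula it is very difficult to verify that any candidate involution is genuinely sign-reversing. A combinatorial interpretation of $\phi^\lambda(\inc(P))$ for $(\mathbf{3}+\mathbf{1})$-free $P$---i.e., a solution to the problem posed immediately after Theorem~\ref{t:hikita}---would be the natural prerequisite, and I expect any complete proof of Conjecture~\ref{c:monimm} to pass through such a combinatorial refinement.
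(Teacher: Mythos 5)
This statement is Stembridge's Conjecture~\cite[Conj.\,2.1]{StemConj}, and the paper does \emph{not} prove it; it is stated as an open conjecture, with only partial evidence offered (the unlabeled proposition after Theorem~\ref{t:hikita} treats antiadjacency matrices of unit interval orders, and Proposition~\ref{p:sumofmonimms} treats only the coarser sums $\sum_{\ell(\mu)=\ell}\imm{\phi^\mu}(x)$). Your reduction to $\Sigma_K \geq 0$ for each bijective skeleton $K$ via Theorem~\ref{t:tnnconv} and Proposition~\ref{p:immincP} is sound and is precisely the paper's framework, but the argument you build on top of it contains a genuine gap.

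The primary attack rests on the structural claim that every poset $P(\pi)$ arising from a path family in a planar network is $(\mathbf3+\mathbf1)$-free, and that claim is false. Planarity forces three pairwise non-intersecting paths $\pi_a,\pi_b,\pi_c$ (with $a<b<c$) to nest, but nothing prevents a fourth path $\pi_d$ from oscillating so as to share a vertex with each of $\pi_a,\pi_b,\pi_c$ while $\pi_a,\pi_b,\pi_c$ remain pairwise disjoint; this produces an induced $\mathbf3+\mathbf1$ in $P(\pi)$. Indeed, if your structural claim were true, then Proposition~\ref{p:immincP} combined with Gasharov's theorem (Theorem~\ref{t:KaliGash}(2)) would immediately give $\imm{\chi^\lambda}(A) = \sum_U \wgt(U)$ over standard $F$-tableaux of shape $\lambda$ for \emph{every} $\lambda\vdash n$, resolving Problem~\ref{p:stemimm}; the fact that the paper poses that problem as open is exactly a reflection of the failure of your claim. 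Your fallback---a sign-reversing involution on $\Pi_e(K)$ pairing ``bad'' path families---is not a construction but a wish, and as you concede yourself, in the absence of a combinatorial formula for $\phi^\lambda(\inc(P))$ (which Hikita's proof does not supply, and which the paper poses as a separate open problem) there is no way to verify that any candidate pairing is sign-reversing. So neither branch of your argument closes, and the conjecture remains exactly as open as the paper leaves it.
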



Some evidence for Conjecture~\ref{c:monimm} follows from recent work
of Hikita~\cite{HikitaProof}.
\begin{prop}
  If $A$ is the antiadjacency matrix of a unit interval order
  labeled as in Step 2 of Algorithm~\ref{a:PtoC},
  then for all $\lambda \vdash n$ we have that $\imm{\phi^\lambda}(A) \geq 0$.
\end{prop}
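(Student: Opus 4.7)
\begin{proofidea}
The plan is to show $\imm{\phi^\lambda}(A) = \phi^\lambda(\inc(P))$ and then apply Hikita's theorem.

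By the definition~(\ref{eq:immdef}) of $\imm{\phi^\lambda}$ and the definition~(\ref{eq:antiadj}) of the antiadjacency matrix, the monomial $\permmon aw$ equals $1$ when $w_i \not>_P i$ for every $i$ and equals $0$ otherwise. Hence $\imm{\phi^\lambda}(A) = \phi^\lambda(D)$, where $D = \sum_w w$ is summed over all $w \in \sn$ satisfying $w_i \not>_P i$ for all $i$.

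The crucial step is the identification of $D$ with the Kazhdan--Lusztig basis element $C'_{w(P)}(1)$ of Algorithm~\ref{a:PtoC}. The permutation $w(P)$ constructed in Step~3 is the Bruhat-maximal element of $\{ w : w_i \not>_P i \; \forall i \}$, and when $P$ is labeled as in Step~2 this set is precisely the Bruhat interval $[e, w(P)]$. Since $w(P)$ is $312$-avoiding, the Schubert variety $X_{w(P)}$ is smooth, all Kazhdan--Lusztig polynomials $P_{v, w(P)}$ equal $1$, and therefore $C'_{w(P)}(1) = \sum_{v \leq w(P)} v = D$. Combining this with Proposition~\ref{p:uioto312avoid} and the definition~(\ref{eq:thetaG}) gives
\begin{equation*}
  \imm{\phi^\lambda}(A) = \phi^\lambda(D) = \phi^\lambda(C'_{w(P)}(1)) = \phi^\lambda(\inc(P)).
\end{equation*}
Since $P$ is a unit interval order and hence $(\mathbf{3}+\mathbf{1})$-free, Hikita's theorem (Theorem~\ref{t:hikita}) supplies $\phi^\lambda(\inc(P)) \geq 0$, as required.

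The main obstacle is the identification $D = C'_{w(P)}(1)$. It combines two known but nontrivial ingredients: a Bruhat-interval description of the ``$P$-compatible'' permutations that depends on the Step~2 labeling, and the collapse of the Kazhdan--Lusztig polynomials $P_{v,w(P)}$ for $312$-avoiding $w(P)$. Both are classical, but any careful write-up must either cite them explicitly or supply short direct proofs. A fallback route, if either ingredient proves troublesome, is to apply Proposition~\ref{p:immincP} to a planar network $F$ with path matrix $A$ chosen so that every path-family poset $P(\pi)$ is a unit interval order, and then invoke Hikita termwise; but the Kazhdan--Lusztig approach above is considerably cleaner.
\end{proofidea}
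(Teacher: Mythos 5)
Your proposal is correct and follows essentially the same route as the paper: reduce $\imm{\phi^\lambda}(A)$ to $\phi^\lambda(C'_{w(P)}(1))$ via the Ferrers/Bruhat-interval description of the support of $v \mapsto \permmon av$, then apply Proposition~\ref{p:uioto312avoid} and Theorem~\ref{t:hikita}. The only cosmetic difference is that you justify $C'_{w(P)}(1) = \sum_{v \le w(P)} v$ through smoothness of the Schubert variety for the $312$-avoiding $w(P)$, while the paper simply reads this off from Step~4 of Algorithm~\ref{a:PtoC} and cites \cite{WatsonBruhat} for the Bruhat-interval characterization.
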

\begin{proof}
  Let $A$ be the antiadjacency matrix of unit interval order $P$,
  labeled as in Step 2 of Algorithm~\ref{a:PtoC},
  and let $w$ be the $312$-avoiding permutation
  associated to $P$ by Step 3 of Algorithm~\ref{a:PtoC}.
  It is well known that the entries of $A$ which are equal to $1$
  form a Ferrers shape, and that we have
  \begin{equation*}
    \permmon av = \begin{cases} 1 &\text{if $v \leq w$},\\
      0 &\text{otherwise}.
    \end{cases}
  \end{equation*}
  (See, e.g., \cite[Prop.\,19, Prop.\,22]{WatsonBruhat}.)  Thus we have
  \begin{equation*}
    \imm{\phi^\lambda}(A) =
    \sum_{v \in \sn} \phi^\lambda(v) \permmon av
    = \sum_{v \leq w} \phi^\lambda(v) 
    = \phi^\lambda \Big( \sum_{v \leq w} v \Big)
    = \phi^\lambda(C'_w(1)).
  \end{equation*}
  By Proposition~\ref{p:uioto312avoid} this number is $\phi^\lambda(\inc(P))$,
  and by Theorem~\ref{t:hikita} it is nonnegative.
\end{proof}

More evidence for Conjecture~\ref{c:monimm} follows from
work of Stanley~\cite{StanSymm}.  
\begin{prop}\label{p:sumofmonimms}
  For $\ell = 1,\dotsc,n$, the sum
    \begin{equation}\label{eq:sumofmux}
        \sumsb{\mu \vdash n\\\ell(\mu) = \ell} \ntnsp \imm{\phi^\mu}(x)
    \end{equation}
    of monomial immanants is a totally nonnegative polynomial.
    In particular, for $A$ the path matrix of planar network $F$, we have
    \begin{equation}\label{eq:sumofmuA}
      \sumsb{\mu \vdash n\\\ell(\mu) = \ell} \ntnsp \imm{\phi^\mu}(A) =
      \sum_U \wgt(U), 
    \end{equation}
    where the sum is over row-semistrict $F$-tableaux $U$
    of shape $n$ having $\ell$ records.
\end{prop}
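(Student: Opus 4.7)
The plan is to combine the combinatorial interpretation in Proposition~\ref{p:stanksources}(ii) with the path-matrix machinery of Proposition~\ref{p:immincP} and Corollary~\ref{c:immincP}. Both halves of the statement will follow without new work, so this result is essentially a corollary.

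First, I would introduce the trace $\theta^\ell = \sum_{\mu \vdash n,\, \ell(\mu)=\ell} \phi^\mu \in \trspace n$ from equation~(\ref{eq:thetaldef}), so that by linearity
\begin{equation*}
  \imm{\theta^\ell}(x) = \sumsb{\mu \vdash n\\ \ell(\mu) = \ell} \imm{\phi^\mu}(x).
\end{equation*}
The key observation is Proposition~\ref{p:stanksources}(ii): for every poset $P$, the number $\theta^\ell(\inc(P))$ equals the number of descent-free (row-semistrict) $P$-tableaux of shape $n$ having $\ell$ records. Since this is a count, it is a nonnegative integer for every $P$.

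Next, I would invoke Corollary~\ref{c:immincP}. Because $\theta^\ell(\inc(P)) \geq 0$ for all posets $P$, that corollary asserts directly that $\imm{\theta^\ell}(x)$ is a totally nonnegative polynomial. This establishes the first claim of the proposition.

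For the explicit formula (\ref{eq:sumofmuA}), I would apply the second part of Proposition~\ref{p:immincP}. The property identified in Proposition~\ref{p:stanksources}(ii), namely being a descent-free $P$-tableau of shape $n$ with $\ell$ records, is exactly a property of $P$-tableaux counted by $\theta^\ell(\inc(P))$ uniformly in $P$, which is the hypothesis of equation (\ref{eq:propcorcomb2}). Therefore, for $F$ a planar network with path matrix $A$,
\begin{equation*}
  \imm{\theta^\ell}(A) = \sum_U \wgt(U),
\end{equation*}
where $U$ ranges over $F$-tableaux of shape $n$ that are row-semistrict and have $\ell$ records. The main ``obstacle'' is really only a bookkeeping one: one needs to verify that the property describing $\theta^\ell(\inc(P))$ is a property of the underlying tableau (rather than of the poset itself) so that the transfer from $P$-tableaux to $F$-tableaux via Proposition~\ref{p:immincP} applies. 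Since being descent-free and the count of records are defined in terms of the $P$-order pulled back to the tableau entries, this transfer is automatic, and the proof is complete.
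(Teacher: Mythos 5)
Your proposal is correct and follows essentially the same route as the paper: both hinge on defining $\theta^\ell$, reading off its nonnegativity and combinatorial meaning from Proposition~\ref{p:stanksources}, and transferring to immanants via Proposition~\ref{p:immincP}. The only cosmetic difference is order of operations — you invoke Corollary~\ref{c:immincP} first to get total nonnegativity and then derive the explicit formula, while the paper derives the explicit formula (\ref{eq:sumofmuA}) directly from Proposition~\ref{p:immincP} and reads off nonnegativity from the nonnegative weights; since Corollary~\ref{c:immincP} is itself a consequence of Proposition~\ref{p:immincP}, the two arguments are equivalent.
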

\begin{proof}
  Fix a totally nonnegative matrix $A$
  and
 define the trace
  \begin{equation}
    \theta^\ell = \sumsb{\mu \vdash n\\\ell(\mu) = \ell} \ntnsp \phi^\mu
  \end{equation}
  so that the left-hand side of (\ref{eq:sumofmuA}) is $\imm{\theta^\ell}(A)$.
  By
  Proposition~\ref{p:immincP}
  we have that
  \begin{equation*}
    \imm{\theta^\ell}(A) =
    \sum_K \wgt(K) \nTksp
    \sum_{\pi \in \Pi_e(K)} \nTksp \theta^\ell(\inc(P(\pi)),
  \end{equation*}
  where the first sum is over all bijective skeletons $K$ in $F$.
  By Proposition~\ref{p:stanksources}, the inner sum equals the
  number of
  descent-free $P(\pi)$-tableaux of shape $n$
  having $\ell$
  records.
  Equivalently, it equals the number of
  row-semistrict $F$-tableaux of shape $n$ having $\ell$
  records.
  Thus the evaluation (\ref{eq:sumofmuA}) has the desired interpretation.
  Since each such $F$-tableau has weight
  $\wgt(K) \geq 0$, 
  the polynomial (\ref{eq:sumofmux}) is totally nonnegative.  
\end{proof}
For example, let $F$, $A$ be as in (\ref{eq:planarnet}).
The path families $\pi$, $\rho$, $\sigma$, and $\tau$
in (\ref{eq:4pathfams}) contribute $7$ to $\imm{\theta^2}(A)$,
with each of the tableaux
\begin{equation*}
\tableau[scY]{\pi_1,\pi_2,\pi_3}\,, \quad
\tableau[scY]{\pi_1,\pi_3,\pi_2}\,, \quad
\tableau[scY]{\rho_1,\rho_2,\rho_3}\,, \quad
\tableau[scY]{\rho_1,\rho_3,\rho_2}\,, \quad
\tableau[scY]{\sigma_1,\sigma_3,\sigma_2}\,, \quad
\tableau[scY]{\tau_1,\tau_2,\tau_3}\,, \quad
\tableau[scY]{\tau_1,\tau_3,\tau_2}
\end{equation*}
having records in positions $1$ and $2$.

\section{Main result and open problems}\label{s:main}

We now show that Heyfron's inequalities (Theorem~\ref{t:heyfron})
hold not only for Hermitian positive semidefinite matrices,
but also for totally nonnegative matrices.

\begin{thm}\label{t:main}
  For each $n \times n$ totally nonnegative matrix $A$ we have
\begin{equation*}
   \perm(A)=\frac{\imm{\chi^n}(A)}{\chi^{n}(e)}\geq \frac{\imm{\chi^{n-1,1}}(A)}{\chi^{n-1,1}(e)}\geq \frac{\imm{\chi^{ n-2,1,1}}(A)}{\chi^{n-2,1,1}(e)}\geq \cdots \geq \frac{\imm{\chi^{1,\dotsc,1}}(A)}{\chi^{1,\dotsc,1}(e)}=\det(A).
\end{equation*} 
Equivalently, for $k = 2,\dotsc,n$,
the difference
        \begin{equation}\label{eq:thediff}
        \frac{\imm{\chi^{k1^{n-k}}}(x)}{\chi^{k1^{n-k}}(e)}
        -
        \frac{\imm{\chi^{(k-1)1^{n-k+1}}}(x)}{\chi^{(k-1)1^{n-k+1}}(e)}
        =
         \frac{\imm{\chi^{k1^{n-k}}}(x)}{\binom{n-1}{k-1}}
        -
        \frac{\imm{\chi^{(k-1)1^{n-k+1}}}(x)}{\binom{n-1}{k-2}}
    \end{equation}
        is a totally nonnegative polynomial.
\end{thm}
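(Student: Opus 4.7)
The plan is to reduce the total-nonnegativity claim to a pointwise inequality on each summand in the Lindstr\"om--Karlin--McGregor expansion of the two immanants, and then to invoke Lemma~\ref{l:hookineq}. First, Observation~\ref{o:hookchie} identifies $\chi^{k1^{n-k}}(e) = \binom{n-1}{k-1}$ and $\chi^{(k-1)1^{n-k+1}}(e) = \binom{n-1}{k-2}$; the elementary identity $(k-1)\binom{n-1}{k-1} = (n-k+1)\binom{n-1}{k-2}$ then shows that the polynomial in (\ref{eq:thediff}) is a positive scalar multiple of
\begin{equation*}
(k-1)\imm{\chi^{k1^{n-k}}}(x) - (n-k+1)\imm{\chi^{(k-1)1^{n-k+1}}}(x),
\end{equation*}
so it suffices to prove total nonnegativity of this polynomial.

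Fix a totally nonnegative matrix $A$. By Theorem~\ref{t:tnnconv}, there is a nonnegative weighted planar network $F$ whose path matrix is $A$. Applying Proposition~\ref{p:immincP} to each of the two immanants and combining terms, the difference $(k-1)\imm{\chi^{k1^{n-k}}}(A) - (n-k+1)\imm{\chi^{(k-1)1^{n-k+1}}}(A)$ becomes
\begin{equation*}
\sum_K \wgt(K) \ntnsp \sum_{\pi \in \Pi_e(K)} \ntnsp \bigl[(k-1)\chi^{k1^{n-k}}(\inc(P(\pi))) - (n-k+1)\chi^{(k-1)1^{n-k+1}}(\inc(P(\pi)))\bigr],
\end{equation*}
where $K$ ranges over all bijective skeletons in $F$. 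Since each $\wgt(K)$ is nonnegative, it remains only to verify that the inner bracket is nonnegative for every path family $\pi \in \Pi_e(K)$.

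This is precisely what Lemma~\ref{l:hookineq} supplies, provided the poset $P(\pi)$ is naturally labeled. The defining condition $\pi_i <_{P(\pi)} \pi_j \Rightarrow i<j$ makes the labeling $\pi_i \mapsto i$ order-preserving, so Lemma~\ref{l:hookineq} applies verbatim and yields the desired pointwise inequality; summing completes the first proof. A second, more combinatorial proof would instead use Theorem~\ref{t:hookimm} to expand both immanants as weighted sums over standard $F$-tableaux, and then lift the injection $f_k$ constructed in the proof of Lemma~\ref{l:hookineq} to a weight-preserving injection between the corresponding marked $F$-tableau sets, obtaining a combinatorial interpretation of the difference in (\ref{eq:thediff}) rather than just its sign. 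The only substantive point in either route is confirming the natural-labeling hypothesis of Lemma~\ref{l:hookineq}, which is immediate from the indexing convention on path families; the real work is already embedded in the supporting results.
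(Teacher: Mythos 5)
Your argument is correct and is essentially the paper's own first proof: clear denominators to reduce to the total nonnegativity of $(k-1)\imm{\chi^{k1^{n-k}}}(x) - (n-k+1)\imm{\chi^{(k-1)1^{n-k+1}}}(x)$, expand over a planar network via the path-family/poset machinery (the paper uses Theorem~\ref{t:hookimm}, which is the hook-shape specialization of Proposition~\ref{p:immincP} that you invoke directly), and apply Lemma~\ref{l:hookineq} pointwise to each $P(\pi)$. Your explicit check that $P(\pi)$ is naturally labeled (since $\pi_i <_{P(\pi)} \pi_j$ forces $i<j$) is a detail the paper leaves implicit; note, though, that the paper's genuinely distinct second proof goes a different route entirely, expanding hook immanants in the basis $\imm{\theta^\ell}(x)$ via Proposition~\ref{p:hookKostka} and verifying nonnegativity of the resulting coefficients, which is not what your sketched ``second proof'' does.
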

\begin{proof}[First proof]
        Multiplying (\ref{eq:thediff}) by $\tfrac{(n-1)!}{(n-k)!(k-2)!}$,
        we have
        \begin{equation}\label{eq:theseconddiff}
          (k-1)\imm{\chi^{k1^{n-k}}}(x) - (n-k+1)\imm{\chi^{(k-1)1^{n-k+1}}}(x).
        \end{equation}
        Let $A$ be an $n \times n$ totally nonnegative matrix.
        By Theorem~\ref{t:tnnconv}, we may choose a planar network $F$
        whose path matrix is $A$.
        By Theorem~\ref{t:hookimm},
        the evaluation of (\ref{eq:theseconddiff}) at $A$ equals
        \begin{equation}\label{eq:eval}
          \begin{aligned}
          &(k-1) \ntnsp \sum_\pi \wgt(\pi) d_k(\pi) -
          (n-k+1) \ntnsp \sum_\pi \wgt(\pi) d_{k-1}(\pi)\\
            &= \sum_\pi \wgt(\pi) [(k-1)d_k(\pi) - (n-k+1)d_{k-1}(\pi)],
          \end{aligned}
        \end{equation}
        where $d_k(\pi)$ is the number of standard $\pi$-tableaux
        of shape $k 1^{n-k}$, and
        the sums are over
        \begin{equation*}
          \pi \in \bigcup_K
          \Pi_e(K)
        \end{equation*}
        with $K$ varying over all bijective skeletons in $F$.
        By Proposition~\ref{l:hookineq}
        the difference in square brackets in the last sum equals
        the number of standard $P(\pi)$-tableaux
        of shape $k1^{n-k}$ with one marked
        entry in columns $2,\dotsc,k$ which is incomparable to at
        least one entry in an earlier column.
        Since this number and $\wgt(\pi)$ are nonnegative, the evaluation
        (\ref{eq:eval}) is nonnegative.  Thus the polynomial
        (\ref{eq:theseconddiff}) is totally nonnegative
        and so is the polynomial (\ref{eq:thediff}).
\end{proof}
\begin{proof}[Second proof]
  Define the traces $\theta^1, \dotsc, \theta^n$
  as in (\ref{eq:thetaldef}).
  By Proposition~\ref{p:hookKostka}
  we have that
  the hook irreducible character immanant indexed by $k1^{n-k}$
  belongs to the $n$-dimensional space spanned by
  $\imm{\theta^1}(x), \dotsc, \imm{\theta^n}(x)$.  Specifically,
    \begin{equation}\label{eq:hookirrmon}
      \imm{\chi^{k1^{n-k}}}(x) =
      \nTksp \sum_{\ell = n-k+1}^n \ntksp \binom{\ell-1}{n-k} \, \imm{\theta^\ell}(x).
    \end{equation}
It follows that for $k = 2,\dotsc,n$, the difference (\ref{eq:thediff})
      expands in the basis of $\theta^\ell$-immanants as
      $c_{k,1} \imm{\theta^1}(x) + \cdots + c_{k,n} \imm{\theta^n}(x)$
    with nonnegative coefficients
    \begin{equation*}
        c_{k,\ell} = \frac{\binom{\ell-1}{n-k}}{\binom{n-1}{k-1}} - \frac{\binom{\ell-1}{n-k+1}}{\binom{n-1}{k-2}} = 
        \begin{cases}
        0
        &\text{if $\ell = 1,\dotsc,n-k$},\\
            \text{\raisebox{1.25mm}{$\frac{1}{\binom{n-1}{k-1}}$}} &\text{if $\ell = n-k+1$},\\
            \frac{n-\ell}{\ell-(n-k+1)} &\text{if $\ell = n-k+2, \dotsc, n$}.
        \end{cases}
    \end{equation*}
    By Proposition~\ref{p:sumofmonimms}, each immanant $\imm{\theta^\ell}(x)$
    is a totally nonnegative polynomial.
    Thus for $k = 2,\dotsc, n$
    the difference (\ref{eq:thediff}) is as well.
\end{proof}
Theorem~\ref{t:main} thus provides some progress on the problem
of understanding (\ref{eq:irrimmineq}).
\bp
Characterization the pairs $(\lambda,\mu)$
of partitions satisfying
\begin{equation}\label{eq:irrimmineq2}
  \frac{\imm{\chi^\lambda}(A)}{\chi^\lambda(e)}
  \geq
  \frac{\imm{\chi^\mu}(A)}{\chi^\mu(e)}
\end{equation}
for all totally nonnegative
or Hermitian positive semidefinite matrices.
\ep

Generalizing Heyfron's inequalities, Pate~\cite{PatePICIGMF}
proved that if $\lambda$, $\mu$
are two partitions of $n$, with $k$ the
multiplicity of $\lambda_1$ in $\lambda = (\lambda_1,\dotsc,\lambda_\ell)$
and
\begin{equation}\label{eq:pate}
  \mu = (\lambda_1 - 1, \dotsc, \lambda_k - 1,
  \lambda_{k+1}, \dotsc, \lambda_\ell, \;
  \underbrace{\ntksp 1, \dotsc, 1 \ntksp }_k\;),
  \end{equation}
then we have (\ref{eq:irrimmineq2}) for all Hermitian positive semidefinite
matrices.  In other words, the Young diagram of $\mu$ is obtained by
removing the rightmost column of the Young diagram of $\lambda$
and by appending this to the first column, for example
\begin{equation*}
  \lambda =
   (4, 4, 3, 2) =
      \tableau[scY]
        {\ , \ |
         \ , \ , \ |
         \ , \ , \ , \ |
         \ , \ , \ , \ }\,,
      \qquad
      \mu =
   (3, 3, 3, 2, 1, 1) =
      \tableau[scY]
        {\ |
         \ |
         \ , \ |
         \ , \ , \ |
         \ , \ , \ |
         \ , \ , \ }\,.
\end{equation*}
Perhaps this inequality holds
for totally nonnegative matrices as well.
\bp
Decide if Pate's inequality, i.e., (\ref{eq:irrimmineq2}) for $\lambda$, $\mu$
satisfying (\ref{eq:pate}), holds for all totally nonnegative matrices.
\ep

Theorem~\ref{t:main} and its proofs suggest several other open problems.
Haiman~\cite[Conj.\.2.1]{HaimanHecke} has conjectured that
certain $q$-analogs $\phi_q^\lambda$ and $C'_w(q)$
of the monomial trace $\phi^\lambda$
and Kazhdan--Lusztig basis element $C'_w(1)$
satisfy
\begin{equation*}
  \phi_q^\lambda(\qp{\ell(w)}2C'_w(q)) \in \mathbb N[q]
  \end{equation*}
for all $\lambda$ and all $w$.
(See \cite{HaimanHecke} for definitions.)
Perhaps the following weaker statment would be easier to prove.
\bp
Show that for $\ell = 1,\dotsc,n$ and all $w \in \sn$ we have
$\theta_q^\ell(\qp{\ell(w)}2C'_w(q)) \in \mathbb N[q]$,
where
\begin{equation*}
  \theta_q^\ell = \sumsb{\lambda \vdash n\\ \ell(\lambda) = \ell} \phi_q^\lambda.
  \end{equation*}
\ep
\noindent
This is known to be true for $w$ \avoidingp.
(See, e.g., \cite[Thm.\,5.6]{CHSSkanEKL}, \cite[Prop.\,5.4]{SkanCCS}.)

It would also be interesting to find an analog of the
Littlewood--Merris--Watkins identities (\ref{eq:lmw}), (\ref{eq:lmw2})
for the immanants
$\{ \imm{\theta^\ell}(x) \,|\, \ell \in [n] \}$.

\bp For $\ell = 1,\dotsc, n$, find an expression for the
immanant $\imm{\theta^\ell}(x)$ which makes its
total nonnegativity apparent.
\ep


\section{Acknowledgements}

The author is grateful to Alex Fink
and Keith McNulty for helpful conversations,
and to
the Mathematics Genealogy Project
for displaying Keith McNulty's relevant but unpublished thesis.

\bibliography{skan}
\end{document}